\documentclass{amsart}

\usepackage{amsmath,amssymb,amsthm,enumerate}
\usepackage[pagebackref]{hyperref}
\usepackage[initials,backrefs]{amsrefs}

\newtheorem{thm}{Theorem}[section]

\newtheorem{prop}[thm]{Proposition}

\newtheorem*{HenselLemma}{Hensel's Lemma}

\theoremstyle{remark}

\newtheorem{example}[thm]{Example}

\theoremstyle{definition}
\newtheorem{defn}[thm]{Definition}

\newcommand{\set}[2]{\left\{ #1 \mid #2\right\}}
\newcommand{\ex}[2]{\exists #1 \left( #2 \right)}
\newcommand{\all}[2]{\forall #1 \left( #2 \right)}

\newcommand{\Bga}{B_{\geq\gamma}(\alpha)}

\newcommand{\cO}{\mathcal{O}}
\newcommand{\maxm}{\mathfrak{m}}
\newcommand{\rvd}{\rv_\delta}
\newcommand{\rvg}{\rv_\gamma}
\newcommand{\RVd}{\RV_\delta}
\newcommand{\RVg}{\RV_\gamma}

\newcommand{\opd}{\oplus_\delta}
\newcommand{\opg}{\oplus_\gamma}
\newcommand{\ep}{\varepsilon}

\newcommand{\bbN}{\mathbb{N}}
\newcommand{\bbZ}{\mathbb{Z}}
\newcommand{\bbQ}{\mathbb{Q}}

\newcommand{\bfx}{\mathbf{x}}
\newcommand{\bfy}{\mathbf{y}}
\newcommand{\bfz}{\mathbf{z}}

\newcommand{\bfu}{\mathbf{u}}
\newcommand{\bfw}{\mathbf{w}}

\DeclareMathOperator{\chr}{char}
\DeclareMathOperator{\res}{res}
\DeclareMathOperator{\RV}{RV}
\DeclareMathOperator{\rv}{rv}
\DeclareMathOperator{\acl}{acl}

\begin{document}

\title{Relative decidability and definability in henselian valued fields}

\author{Joseph Flenner}
\address{University of Notre Dame \\
Department of Mathematics \\ 
255 Hurley Hall \\ 
Notre Dame, IN 46556 \\
U.S.A.} 

\email{jflenner@nd.edu}

\date{\today}

\begin{abstract}
Let $K$ be a henselian valued field of characteristic $0$.  Then $K$ admits a definable partition on each piece of which the leading term of a polynomial in one variable can be computed as a definable function of the leading term of a linear map.  Two applications are given: first, a constructive quantifier elimination relative to the leading terms, suggesting a relative decision procedure; second, a presentation of every definable subset of $K$ as the pullback of a definable set in the leading terms subjected to a linear translation.
\end{abstract}

\maketitle

\section{Introduction}

In \cite{hol1}, Holly showed that definable subsets of algebraically closed valued fields can be expressed canonically as disjoint unions of \emph{swiss cheeses}, sets of the form
$$S \setminus (T_1 \cup \ldots \cup T_n)$$
where the $S, T_i$ are open or closed balls.  In this way she presents the balls as the basic building blocks of the definable subsets of the field $K$.  The language of valued fields used here is a three-sorted one, with sorts for the field, the value group, and the residue field.

Holly's theorem relied essentially on the completeness and quantifier elimination in the theory of algebraically closed valued fields (ACVF) dating from Robinson \cite{rob1}.  As made explicit in \cite{hol2}, this was intended as a first step towards the elimination of imaginaries for ACVF that came to fruition in work of Haskell, Hrushovski, and Macpherson in \cite{hhm1}.  This in turn became a starting point for a line of work establishing ACVF as a testing ground for the adaptation of methods from stability theory to nonstable theories.  See for example the monograph \cite{hhm2} of Haskell, Hrushovski, and Macpherson.

Meanwhile, model-theoretic work on the $p$-adics has paralleled to some degree work on ACVF.  We have, for example, the decision procedure of Cohen \cite{coh1} and quantifier elimination of Macintyre \cite{mac1}.  Macintyre's theorem exists in a language enhancing the usual valued field language by a system of predicates identifying the $n^\text{th}$ powers for each $n$.  Translated into the value group $(\bbZ, +)$, this evokes the divisibility predicates which are precisely what is needed to achieve quantifier elimination in Presburger arithmetic.

Indeed, it appears that in the general setting of henselian valued fields, many of the sort of results holding outright in ACVF can be proved, in a sense, \emph{modulo} the associated theories of residue field and value group.  This idea began with Ax-Kochen \cite{ak1,ak2,ak3} and Ersov, establishing the completeness of the theory of henselian valued fields of pure characteristic $0$ relative to the theories of the residue field and value group.

For general henselian fields, however, quantifier elimination relative to the residue field and value group fails.  Suggesting that perhaps the three-sorted language employed by Holly is not optimal in the henselian case, Kuhlmann \cite{kuh1} has obtained elimination of quantifiers relative to an associated structure of `additive and multiplicative congruences'.

We aim to prove for henselian valued fields of characteristic $0$ analogues of both Holly's theorem on canonical forms of subsets of the field and of Cohen's on decidability.  To do so we adopt a language built around structures of leading terms which is equivalent to (but for our purposes somewhat more syntactically convenient than) Kuhlmann's.  These both capture the information of the value group and residue field, and provide an algebraic view of the topology of balls.

Section \ref{leadingterms} covers the relevant definitions and basic properties.  The main technical tool is in Section \ref{decomposition}, in which it is shown that the field admits a partition on each piece of which the leading term of a polynomial in $x$ can be easily computed in terms of the leading term of $x-a$, some $a\in K$.  Section \ref{qe} uses this to describe a constructive relative quantifier elimination procedure (differing in particular from Kuhlmann's result in its constructivity), while Section \ref{subsets} concludes with a characterization of the definable subsets (in one variable) of the field relative to the definable subsets of the leading term structures.

While elimination of imaginaries has already been generalized from ACVF to the $p$-adics \cite{hm1} (as well as real-closed valued fields \cite{mel1}), it is hoped that this may eventually form the one-dimensional case for a more native and comprehensive approach to a relative elimination of imaginaries for henselian valued fields in characteristic $0$.

\subsection{Acknowledgments}  The bulk of the research presented here was done while the author was a graduate student at the University of California, Berkeley under the supervision of Thomas Scanlon.  My gratitude for Dr. Scanlon's advice and insight through countless discussions on this subject (and many others as well) cannot be overstated.  I would also like to thank Deirdre Haskell, Dugald Macpherson, and Anand Pillay for helpful conversations and support during and after visits to Hamilton and Leeds.  Some loose ends were tied up and a first draft written while I was hosted by the Hausdorff Research Institute for Mathematics, whose hospitality during their trimester program on Diophantine Equations I happily acknowledge.

\section{Leading terms}
\label{leadingterms}

\subsection{Definitions and notation}
\label{defandnot}

To fix notation, we work in a valued field $K$ with 
value group $V$ 
and valuation ring $\cO:=\set{x\in K}{v(x)\geq 0}$.
Among the ideals of $\cO$ are 
$$\maxm_\delta := \set{x\in\cO}{v(x)>\delta}$$ 
and in particular the (unique) maximal ideal $\maxm := \maxm_0$.

The residue field is $R:=\cO/\maxm$, and the residue of $x$ is written either 
$\bar{x}$ or $\res(x)$ as convenient.  More generally, for any $\delta\geq 0$ in $V$ we have the ring $R_\delta:=\cO/\maxm_\delta$ with reduction map $\res_\delta:\cO\rightarrow R_\delta$.

Valued fields possess a topology having as basic open sets the open balls 
$$B_{> \delta}(a) := \set{x \in K}{v(x-a) > \delta}$$ 
with center $a$ and radius $\delta$.  
Closed balls $B_{\geq\delta}(a)$ are defined in the obvious way, and we will also have occasion to refer to balls of the form
$$B_{>\delta / n}(a) := \set{x \in K}{n v(x-a) > \delta}$$
even if $\delta$ is not divisible by $n$ in $V$.  We allow the radius $\delta$ to be either $\infty$ or $-\infty$, so $K$, $\emptyset$, and $\left\{a\right\}$ are all balls.

It is readily shown using the ultrametric inequality $v(x+y)\geq\min\left\{v(x),v(y)\right\}$ that for any two balls $B$ and $C$, if $B\cap C\neq\emptyset$ then $B\subseteq C$ or $C\subseteq B$; that \emph{any} element of $B$ is a center of $B$; and that both the open and closed balls are in fact clopen in the valuation topology.

\begin{defn}
\label{defleadterm}
Let $\delta \geq 0$ in $V$.  The \emph{leading term structure of order $\delta$} is the quotient group
$$\RVd := K^\times / (1+\maxm_\delta).$$
The quotient map is denoted $\rvd : K^\times \rightarrow \RVd$.  
As with the value group, it is convenient to include an element $\infty$ in $\RVd$ as $\rvd(0)$.  Generally, the subscript $0$ will be omitted, so $\RV=\RV_0$ and $\rv=\rv_0$.

Besides the induced multiplication, $\RVd$ inherits a partially defined addition from $K$ via the relation
$$\oplus_\delta(\mathbf{x},\mathbf{y},\mathbf{z}) \Longleftrightarrow
\ex{x,y,z \in K}{\mathbf{x}=\rvd(x) \wedge \mathbf{y}=\rvd(y) \wedge \mathbf{z}=\rvd(z) \wedge x+y=z}.$$ 

The sum $\bfx + \bfy$ is said to be \emph{well-defined} (and $=\bfz$) if there is exactly one $\bfz$ such that $\opd(\bfx,\bfy,\bfz)$.
While the notation $\bfx+\bfy=\bfz$ will be used exclusively when well-defined, in order to better accommodate sums of more than two terms it will be useful to write $\bfx + \bfy\approx\bfz$ for $\opd(\bfx,\bfy,\bfz)$ in general, bearing in mind that $\bfx + \bfy\approx\bfz$ and $\bfx + \bfy\approx\mathbf{w}$ does not imply $\mathbf{z}=\mathbf{w}$.

If $\gamma \geq \delta \geq 0$, since $1+\maxm_\gamma \subseteq 1+\maxm_\delta$ there is a natural map $\RVg\rightarrow\RVd$, which we also denote $\rvd$, or $\rv_{\gamma\rightarrow\delta}$ should there be fear of confusion.

To be clear, then, the \emph{leading term language} refers to a multisorted language
$$\left(K,\langle\RV_{\delta}\rangle_{\delta\in\Delta}\right)$$
with the usual ring language on the field sort, $\Delta\subseteq\set{\delta\in V}{0\leq\delta <\infty}$ to be specified as needed, the multiplication and the relation $\opd$ on each $\RVd$, and as maps between the sorts $\rv_\delta: K\rightarrow\RVd$ and $\rv_{\gamma\rightarrow\delta}:\RVg\rightarrow\RVd$ for each $\gamma\geq\delta\in\Delta$.
\end{defn}

The following propositions justify some of the claims of the Introduction.
The proofs follow directly from the definitions.

\begin{prop}
\label{prop1}
Given $0\leq\delta\in V$, the following are equivalent for all nonzero $x,y\in K$: 
\begin{enumerate}
\item
$\rvd(x)=\rvd(y)$
\item
$v(x-y)>v(y)+\delta$
\item
$\res_\delta(x/y)=1$ in $R_\delta$
\item
$B_{>v(x)+\delta}(x)=B_{>v(y)+\delta}(y)$
\qed
\end{enumerate}
\end{prop}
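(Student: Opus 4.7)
The plan is to show the equivalences in a cycle that minimizes separate arguments, namely $(1)\Leftrightarrow(3)\Leftrightarrow(2)\Leftrightarrow(4)$, with each step essentially algebraic given the definitions.

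For $(1)\Leftrightarrow(3)$, I would simply unwind the definitions: $\rv_\delta(x)=\rv_\delta(y)$ says exactly that $x/y\in 1+\maxm_\delta$, which is $x/y-1\in\maxm_\delta$, i.e.\ $\res_\delta(x/y)=1$ in $R_\delta=\cO/\maxm_\delta$. (The case where one of $x,y$ vanishes is excluded by hypothesis.)

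For $(3)\Leftrightarrow(2)$, the point is that $\res_\delta(x/y)=1$ is equivalent to $v(x/y-1)>\delta$, which by multiplicativity of the valuation is $v(x-y)-v(y)>\delta$, i.e.\ $v(x-y)>v(y)+\delta$. I would flag here the small but important corollary that when $(2)$ holds, the ultrametric inequality forces $v(x)=v(y)$, since $v(x-y)>v(y)+\delta\geq v(y)$; this symmetry is what lets us freely swap $x$ and $y$ in $(2)$ and $(4)$.

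For $(2)\Leftrightarrow(4)$, the forward direction: assuming $v(x-y)>v(y)+\delta$ (hence also $>v(x)+\delta$ by the previous remark), any $z\in B_{>v(x)+\delta}(x)$ has
\[
v(z-y)\geq\min\bigl(v(z-x),v(x-y)\bigr)>v(x)+\delta=v(y)+\delta,
\]
so $z\in B_{>v(y)+\delta}(y)$, and the reverse inclusion is symmetric. Conversely, if the two balls coincide then $x$ lies in $B_{>v(y)+\delta}(y)$, giving $v(x-y)>v(y)+\delta$ directly.

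There is no real obstacle here; the whole proposition is a bookkeeping exercise comparing three ways of saying that $x$ and $y$ differ by a relative error of valuation exceeding $\delta$. The only subtle point to remember to record explicitly is that $(2)$ is symmetric in $x$ and $y$ because it implies $v(x)=v(y)$, so that in $(4)$ the two stated radii in fact agree.
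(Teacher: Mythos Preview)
Your argument is correct and is exactly the routine unwinding of definitions the paper has in mind; indeed, the paper gives no proof beyond the remark that it ``follows directly from the definitions'' and the \qed. Your observation that $(2)$ forces $v(x)=v(y)$ is also the point the paper singles out immediately after the proposition.
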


In particular, note that because $v(x-y)>v(y)$ can occur only when $v(x)=v(y)$, $\rvd(x)=\rvd(y)$ implies $v(x)=v(y)$.  Thus we can speak unambiguously of $v(\mathbf{x})$ for $\mathbf{x}\in\RVd$ (any $\delta \geq 0$).

The following example provides a good general source of intuition.

\begin{example}
\label{exam1}
Let $R$ be any field, and $V$ any ordered abelian group.  The \emph{Hahn field} $R((t^V))$ consists of the formal power series over $R$
$$\sum_{\delta\in V} c_\delta t^\delta$$
where the support $\set{\delta}{c_\delta \neq 0}$ is well-ordered.  Taking 
$v\left(\sum c_\delta t^\delta\right)=\min\left\{\delta\mid c_\delta \neq 0\right\}$, $R((t^V))$ has residue field $R$ and value group $V$.

More concretely, in case $R=\bbQ$ and $V=\bbZ$, we have the field $\bbQ((t))$ of Laurent series over the rational numbers.  Two such series will have the same leading term of order $3$, say, if they have the same value and their first four coefficients coincide.  Thus, if
$$\begin{array}{rcl}
x & = & t^{-2}+t^{-1}+1+t+2t^2+t^3+\ldots \\
y & = & t^{-2}+t^{-1}+1+t+t^2+t^3+\ldots
\end{array}$$
then $\rv_3(x) = \rv_3(y)$
since $v(x)=v(y)=-2$ and $v(x-y)=v(t^2)=2>v(y)+3$.  But $\rv_4(x)\neq\rv_4(y)$.
\end{example}

Next we establish when the addition on $\RVd$ is well-defined.

\begin{prop}
\label{prop3}
Let $\delta\geq 0$, and $v(x+y)=\min\{v(x),v(y)\}$.  Then for all $z$ such that $\rvd(z)=\rvd(x)$, $\rvd(z+y)=\rvd(x+y)$.

Conversely, if $v(x+y)>v(x)$, then there exists $z$ such that $\rvd(z)=\rvd(x)$ but $\rvd(z+y)\neq\rvd(x+y)$.
\end{prop}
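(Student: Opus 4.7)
The plan is to translate both directions into the characterization of $\rvd$-equivalence given by Proposition \ref{prop1}, namely that for nonzero $a,b \in K$, $\rvd(a)=\rvd(b)$ iff $v(a-b) > v(b) + \delta$. The observation driving everything is that $(z+y)-(x+y) = z-x$, so comparing the leading terms of $z+y$ and $x+y$ reduces to comparing $v(z-x)$ with $v(x+y)+\delta$.

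For the forward direction, I would start by using Proposition \ref{prop1} to rewrite the hypothesis $\rvd(z)=\rvd(x)$ as $v(z-x) > v(x) + \delta$. Since the hypothesis $v(x+y) = \min\{v(x),v(y)\}$ implies $v(x+y) \le v(x)$, I get
\[
v\bigl((z+y)-(x+y)\bigr) \;=\; v(z-x) \;>\; v(x)+\delta \;\ge\; v(x+y)+\delta,
\]
and a second application of Proposition \ref{prop1} yields $\rvd(z+y)=\rvd(x+y)$.

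For the converse I would exploit the fact that the hypothesis $v(x+y) > v(x)$ leaves strictly positive room between $v(x)+\delta$ and $v(x+y)+\delta$ in $V$. Pick $u \in K^\times$ with $v(x)+\delta < v(u) \le v(x+y)+\delta$ (for example $v(u) = v(x+y)+\delta$ when $x+y \ne 0$; and if $x+y = 0$ any $u\ne 0$ with $v(u) > v(x)+\delta$ will do, since then $\rvd(x+y)=\infty$ but $\rvd(z+y) = \rvd(u)$ is finite). Set $z := x+u$. Then $v(z-x) = v(u) > v(x)+\delta$, so $\rvd(z)=\rvd(x)$ by Proposition \ref{prop1}; but also $v\bigl((z+y)-(x+y)\bigr) = v(u) \le v(x+y)+\delta$, which is \emph{not} strictly greater, so the contrapositive of Proposition \ref{prop1} gives $\rvd(z+y) \ne \rvd(x+y)$.

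I do not anticipate any substantive obstacle: the whole argument is a bookkeeping exercise with ultrametric valuations, and the only mild care needed is in the converse, to ensure the chosen $v(u)$ lies in the prescribed half-open interval of $V$ and to handle separately the degenerate case $x+y=0$ where $v(x+y)=\infty$.
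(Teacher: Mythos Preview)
Your proof is correct and follows essentially the same approach as the paper: both directions hinge on the identity $(z+y)-(x+y)=z-x$ and compare its valuation to $v(x+y)+\delta$. The only cosmetic difference is that the paper works with the multiplicative description $z=x(1+m)$, $v(m)>\delta$, while you use the equivalent additive form $v(z-x)>v(x)+\delta$ from Proposition~\ref{prop1}; your treatment of the degenerate case $x+y=0$ in the converse is in fact slightly more explicit than the paper's.
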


\begin{proof}
Consider $z=x(1+m)$, with $v(m)>\delta$.  Defining $m':=\frac{xm}{x+y}$, we then find
$$z+y=x(1+m)+y=x+y+(x+y)m'=(x+y)(1+m')$$
and
$$v(m')=v(m)+v(x)-v(x+y)\geq v(m)>\delta.$$

On the other hand, suppose $v(x+y)-v(x)=\ep>0$, and let $m$ be any element of value $\delta+\ep$.  Take $z:=x(1+m)$.  As $v(m)>\delta$, $\rvd(z)=\rvd(x)$.  
But 
$$v((z+y)-(x+y))=v(z-x)=v(x)+v(m)=v(x+y)+\delta$$ 
implies, by Proposition \ref{prop1}, that $\rvd(z+y)\neq\rvd(x+y)$.
\end{proof}

Therefore, there is a well-defined $\mathbf{z} \in \RVd$ such that
$\opd(\rvd(x),\rvd(y),\mathbf{z})$ precisely when $v(x+y)=\min\{v(x),v(y)\}$,
namely $\mathbf{z}=\rvd(x+y)$.

For later use, it will be necessary to extend \ref{prop3} to sums of more than two terms in $\RVd$.  This is not entirely automatic, since even if say $v(x+y+z)=\min\left\{v(x),v(y),v(z)\right\}$, it may be the case that $\rvd(y)+\rvd(z)$ is not well-defined.  It must then be shown that if $\opd(\rvd(y),\rvd(z),\mathbf{u}_1)$ and $\opd(\rvd(y),\rvd(z),\mathbf{u}_2)$ with $\mathbf{u}_1\neq\mathbf{u}_2$, we still have $\rvd(x)+\mathbf{u}_1=\rvd(x)+\mathbf{u}_2$.  This however is easily accomplished with help from Proposition \ref{prop1}.

\begin{prop}
\label{prop3.5}
Suppose that $v(x_1+\ldots+x_n)=\min\left\{v(x_1),\ldots,v(x_n)\right\}$.  Then $\bfy\approx\rvd(x_1)+\ldots+\rvd(x_n)$ if and only if $\mathbf{y}=\rvd(x_1+\ldots+x_n)$.
\qed
\end{prop}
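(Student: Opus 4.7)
The plan is to reduce the general $n$-ary $\approx$ relation to the binary case and then invoke Proposition~\ref{prop1} directly. My first step would be to observe that, unpacking any chain of binary $\opd$-relations witnessing $\bfy\approx\rvd(x_1)+\ldots+\rvd(x_n)$, one obtains field elements $z_1,\ldots,z_n\in K$ with $\rvd(z_i)=\rvd(x_i)$ for each $i$ and $\bfy=\rvd(z_1+\ldots+z_n)$. Granting this, it suffices to show: if $v(x_1+\ldots+x_n)=\min_i v(x_i)$ and $\rvd(z_i)=\rvd(x_i)$ for all $i$, then $\rvd(z_1+\ldots+z_n)=\rvd(x_1+\ldots+x_n)$.

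This last claim is a direct calculation using Proposition~\ref{prop1}. From $\rvd(z_i)=\rvd(x_i)$, clause (2) of that proposition gives $v(z_i-x_i)>v(x_i)+\delta\geq v(x_1+\ldots+x_n)+\delta$, where the second inequality uses the hypothesis that the full sum attains the minimum valuation among the $x_i$. The ultrametric inequality applied to $\sum(z_i-x_i)$ then yields
$$v\left((z_1+\ldots+z_n)-(x_1+\ldots+x_n)\right)>v(x_1+\ldots+x_n)+\delta,$$
and a second application of Proposition~\ref{prop1} gives $\rvd(z_1+\ldots+z_n)=\rvd(x_1+\ldots+x_n)$, as required. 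The reverse implication of the proposition is immediate by taking $z_i=x_i$ throughout.

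The main obstacle I expect is the first step: justifying the simultaneous lifting of the leading terms $\rvd(x_i)$ to field elements $z_i$, given that (as the author notes just before the statement) intermediate $\RVd$-sums along a witnessing chain need not be single-valued. I would handle this by induction on $n$, with $n=2$ being the definition of $\opd$. For the inductive step, a witnessing chain for $\bfy\approx\rvd(x_1)+\ldots+\rvd(x_n)$ takes the form $\opd(\bfu,\rvd(x_n),\bfy)$ with $\bfu$ produced in turn by an $(n-1)$-ary chain (possibly after harmless reordering of the summands); unpacking $\opd(\bfu,\rvd(x_n),\bfy)$ directly to field elements $u+x_n'=y$ and then applying the inductive hypothesis to $\bfu$ yields the desired $z_1,\ldots,z_n$. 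The content of the proposition is precisely that this process, which is indifferent to whether intermediate $\RVd$-sums are well-defined, always arrives at the same $\bfy$.
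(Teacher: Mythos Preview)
Your main calculation (second paragraph) is exactly the paper's intended argument; the proposition is stated without proof beyond the one-sentence hint preceding it, and that hint points to precisely the use of Proposition~\ref{prop1} that you carry out.

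There is a small gap in your third paragraph, however. In the inductive step, the field element $u$ obtained by unpacking $\opd(\bfu,\rvd(x_n),\bfy)$ need not equal the sum $s:=z_1+\cdots+z_{n-1}$ produced by the inductive hypothesis: they agree only in $\RVd$, and if $v(\bfy)>v(\bfu)$ the discrepancy $s-u$ can be large enough relative to $v(\bfy)$ that $\rvd(s+z_n)\neq\rvd(u+z_n)=\bfy$. The fix is to absorb the error $e:=u-s$ into the summand $z_j$ of minimal valuation among $z_1,\ldots,z_{n-1}$: since $v(e)>v(s)+\delta\geq \min_{i<n}v(z_i)+\delta=v(z_j)+\delta$, replacing $z_j$ by $z_j+e$ leaves $\rvd(z_j)$ unchanged while making the partial sum equal to $u$ exactly, so that $z_1,\ldots,z_j+e,\ldots,z_{n-1},z_n$ are the desired lifts of $\bfy$. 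Alternatively, note (as the proof of Proposition~\ref{prop4} makes explicit) that the paper reads the $n$-ary relation $\bfy\approx\rvd(x_1)+\cdots+\rvd(x_n)$ directly as the existence of simultaneous lifts $z_i=x_i(1+m_i)$ with $\bfy=\rvd(\sum z_i)$; under that reading your first and third paragraphs are unnecessary and your second paragraph alone is the complete proof.
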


The next proposition clarifies what happens when the addition is not well-defined.

\begin{prop}
\label{prop4}
Suppose that $v(x_1+\ldots+x_n)-\min\left\{v(x_i)\right\}=\ep > 0$.
If $\gamma \geq \delta + \ep$ and
$\rvg(x_1)+\ldots+\rvg(x_n)\approx\bfz \in\RVg$,
then $\rv_{\gamma\rightarrow\delta}(\bfz)=\rvd(x_1+\ldots+x_n)$.
\end{prop}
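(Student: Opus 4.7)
The plan is to unfold the iterated $\approx$-hypothesis to concrete witnesses in $K$ and then apply Proposition \ref{prop1} to the difference $\sum y_i - \sum x_i$. By induction on $n$---base case $n=2$ being the definition of $\oplus_\gamma$, and the inductive step exploiting the robustness of iterated $\approx$ flagged by the author in the paragraph preceding Proposition \ref{prop3.5}---the hypothesis $\rv_\gamma(x_1) + \ldots + \rv_\gamma(x_n) \approx \mathbf{z}$ yields elements $y_1, \ldots, y_n \in K$ with $\rv_\gamma(y_i) = \rv_\gamma(x_i)$ for each $i$ and $\mathbf{z} = \rv_\gamma(y_1 + \ldots + y_n)$. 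Writing $y_i = x_i(1 + m_i)$ with $v(m_i) > \gamma$, one has $\sum y_i - \sum x_i = \sum x_i m_i$.

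Set $\mu := \min_i v(x_i)$, so by hypothesis $v(\sum x_i) = \mu + \ep$. For each $i$, $v(x_i m_i) = v(x_i) + v(m_i) \geq \mu + v(m_i) > \mu + \gamma$, and the ultrametric inequality combined with the assumption $\gamma \geq \delta + \ep$ yields
$$v\left(\sum x_i m_i\right) > \mu + \gamma \geq \mu + \ep + \delta = v\left(\sum x_i\right) + \delta.$$
Condition (2) of Proposition \ref{prop1} then gives $\rv_\delta(\sum y_i) = \rv_\delta(\sum x_i)$. Since $\rv_{\gamma \rightarrow \delta}$ is the reduction induced by the inclusion $1 + \maxm_\gamma \subseteq 1 + \maxm_\delta$, it commutes with $\rv$; hence $\rv_{\gamma \rightarrow \delta}(\mathbf{z}) = \rv_\delta(\sum y_i) = \rv_\delta(\sum x_i)$, as claimed.

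The only non-routine point is the unfolding step: because $\oplus_\gamma$ can be multivalued at intermediate partial sums, the induction must verify that any admissible sequence of intermediate $\approx$-choices can be realized by a coherent family of $K$-representatives $y_i$. At each inductive step one absorbs the necessary correction term into the summand of minimal valuation, which preserves all the $\rv_\gamma$-classes exactly (here one uses that the correction has value strictly greater than $v(\text{partial sum})+\gamma \geq \mu + \gamma$). With this in hand, the remainder of the proof is the short ultrametric computation above.
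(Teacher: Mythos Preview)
Your proof is correct and follows essentially the same route as the paper: obtain representatives $y_i=x_i(1+m_i)$ with $v(m_i)>\gamma$, compute $v\bigl(\sum y_i-\sum x_i\bigr)>\mu+\gamma\geq v(\sum x_i)+\delta$, and invoke Proposition~\ref{prop1}. The only difference is cosmetic: where the paper writes ``by definition of $\oplus_\gamma$'' to produce the $y_i$ in one stroke, you spell out the inductive unfolding of the iterated $\approx$ and the absorption of correction terms; this is more careful but not a different argument.
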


\begin{proof}
By definition of $\opg$,
there are $z\in K$ and $m_i\in\maxm_\gamma$
such that $\bfz=\rvg(z)$ and $z=x_1(1+m_1)+\ldots+x_n(1+m_n)$.
Now 
$$v(x_1+\ldots+x_n-z)=v(x_1m_1+\ldots+x_nm_n)\geq \min\left\{v(x_im_i)\right\}$$ 
$$>\min\left\{v(x_i)\right\}+\gamma \geq \min\left\{v(x_i)\right\}+\ep+\delta=v(x_1+\ldots+x_n)+\delta$$
and Proposition \ref{prop1} give $\rvd(x_1+\ldots+x_n)=\rvd(z)=\rv_{\gamma\rightarrow\delta}(\bfz)$.
\end{proof}

In other words, when $v(x+y)>v(x)$, while \ref{prop3} shows that there is more
than one $\mathbf{z}\in\RVg$ such that $\rvg(x)+\rvg(y)\approx\mathbf{z}$, 
\ref{prop4} implies that all such $\mathbf{z}$ have the same image in $\RVd$ for $\delta\leq \gamma-(v(x+y)-v(x))$.

As a corollary, the following proposition shows that when $v(x+y)$ is not too much larger than $v(x)$ (compared to $\gamma$), at least $v(\rvg(x)+\rvg(y))$ is well-defined.  On the other hand, when $v(x+y)>v(x)+\gamma$, nothing further can be said.

\begin{prop}
\label{prop4.5}
Suppose $\ep=v(x+y)-v(x)\geq 0$.  Then
\begin{enumerate}[(i)]
\item
if $\gamma \geq \ep$ and $\opg(\rvg(x),\rvg(y),\bfz_1)$ and $\opg(\rvg(x),\rvg(y),\bfz_2)$, then $v(\bfz_1)=v(\bfz_2)$.
\item
if $0\leq\gamma < \ep$ and $v(z)>v(x)+\gamma$, then 
$\opg(\rvg(x),\rvg(y),\rvg(z))$.
\end{enumerate}
\end{prop}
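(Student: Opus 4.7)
The plan is to dispatch the two parts separately, using Proposition \ref{prop4} for (i) and an explicit choice of representatives for (ii).

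For (i), I would take $\delta = 0$ in Proposition \ref{prop4}. The hypothesis $\gamma \geq \ep$ is exactly what is needed to apply that proposition to $x_1 = x$, $x_2 = y$ at $\delta = 0$, yielding $\rv_{\gamma\rightarrow 0}(\bfz_1) = \rv(x+y) = \rv_{\gamma\rightarrow 0}(\bfz_2)$. Since the valuation is well-defined on each $\RV_\delta$ (by the remark following Proposition \ref{prop1}) and is preserved by the transition map $\rv_{\gamma\rightarrow 0}$, this gives $v(\bfz_1) = v(x+y) = v(\bfz_2)$.

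For (ii), since $\gamma < \ep$ forces $\ep > 0$, we have $v(x+y) > v(x)$, and hence (by the usual consequence of the ultrametric inequality) $v(x) = v(y)$. Now I would set $x' = x$, $y' = z - x$, and $z' = z$, so that trivially $x' + y' = z'$. The task reduces to checking $\rvg(y') = \rvg(y)$, i.e.\ via Proposition \ref{prop1}, to checking
\[
v\bigl((z - x) - y\bigr) = v\bigl(z - (x+y)\bigr) > v(y) + \gamma.
\]
The ultrametric inequality gives $v(z - (x+y)) \geq \min\{v(z), v(x+y)\}$, and both of these quantities exceed $v(x) + \gamma = v(y) + \gamma$: the first by hypothesis on $z$, the second because $v(x+y) = v(x) + \ep > v(x) + \gamma$. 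This produces the required witnesses for $\opg(\rvg(x), \rvg(y), \rvg(z))$.

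Neither part presents a real obstacle; (i) is essentially a direct corollary of Proposition \ref{prop4}, and the only nuance in (ii) is noticing that $v(x) = v(y)$ so that the bound $v(z) > v(x) + \gamma$ in the hypothesis matches the $v(y) + \gamma$ needed to invoke Proposition \ref{prop1}.
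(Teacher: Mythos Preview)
Your proof is correct and follows the same strategy as the paper. Part (i) is handled identically: the paper simply says it is ``an immediate consequence of [Proposition~\ref{prop4}] with $\delta = 0$,'' which is exactly what you do (with a bit more detail about why equal image under $\rv_{\gamma\to 0}$ forces equal valuation). For part (ii) there is only a cosmetic difference in the choice of witnesses: the paper uses $x' = x+z$, $y' = -x$ (checking $\rvg(x+z)=\rvg(x)$ from $v(z)>v(x)+\gamma$, and $\rvg(-x)=\rvg(y)$ from $v(x+y)>v(y)+\gamma$), while you fix $x' = x$ and instead verify $\rvg(z-x)=\rvg(y)$. Both choices amount to the same ultrametric estimate and neither is simpler than the other.
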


\begin{proof}
The first statement is an immediate consequence of \ref{prop4} with $\delta = 0$, while the second follows from $\rvg(x)=\rvg(x+z)$, $\rvg(y)=\rvg(-x)$.
\end{proof}

\subsection{Interpretations}

Recall that a structure $N$ is \emph{interpretable} in $M$ over $A\subseteq M$ when there is an $A$-definable subset $S \subseteq M^n$
and an $A$-definable equivalence relation $\sim$ on $S$ such that
\begin{enumerate}[(i)]
\item
the elements of $N$ are in bijection with the equivalence classes of $\sim$, and
\item
the relations on $S$ induced by the relations and functions of $N$ by this bijection are all $A$-definable.
\end{enumerate}

As suggested by Proposition \ref{prop1}, the leading term structures in a sense encompass both residue field and value group.  This can now be made more explicit.

\begin{prop}
\label{prop5}
Let $0\leq\delta\in V$ and $\mathbf{d}\in\RVd$ be any element with $v(\mathbf{d})=\delta$.
\begin{enumerate}
\item
The value group $V$ is interpretable in $\RVd$ over $\{\mathbf{d}\}$.
\item
The ring $R_\delta$ is interpretable in $\RVd$ over $\{\mathbf{d}\}$.
\item
For $\gamma>\delta$, $\RVd$ is interpretable in $\RVg$ over $\{\mathbf{d}\}$.
\end{enumerate}
\end{prop}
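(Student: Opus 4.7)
Each of the three claims expresses the target as the quotient of a definable subset of $\RVd$ (respectively $\RVg$) by a definable equivalence relation, with the operations induced by those on the ambient structure. The common tool is the partial addition $\opd$ (respectively $\opg$) together with the control over its multi-valuedness provided by Propositions \ref{prop1}--\ref{prop4.5}.

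For (1), I take the base set to be $\RVd \setminus \{\infty\}$ and the equivalence relation $\mathbf{x} \sim \mathbf{y}$ to mean $v(\mathbf{x}) = v(\mathbf{y})$; the target $V$ is then realized through the well-defined map $v : \RVd \setminus \{\infty\} \rightarrow V$ noted after Proposition \ref{prop1}. The point is to make $\sim$ definable over $\{\mathbf{d}\}$. I first observe that the predicate ``$v(\mathbf{x}) > \delta$'' is $\emptyset$-definable by ``$\mathbf{1} + \mathbf{x}$ is well-defined and equals $\mathbf{1}$'', which follows from Propositions \ref{prop1} and \ref{prop3}. The parameter $\mathbf{d}$ then lets me shift thresholds: ``$v(\mathbf{x}) > v(\mathbf{y})$'' is equivalent to $\mathbf{1} + (\mathbf{x}\mathbf{d}/\mathbf{y}) = \mathbf{1}$, and $\sim$ is recovered as the conjunction of the two negations. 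The group law on $V$ is induced by multiplication on $\RVd$, and the order by the defined relation.

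For (2), set $S := \{\mathbf{x} \in \RVd : v(\mathbf{x}) \geq 0\} \cup \{\infty\}$, which is $\{\mathbf{d}\}$-definable by part (1), and declare $\mathbf{x} \sim \mathbf{y}$ to hold iff some $\mathbf{z}$ satisfies $\opd(\mathbf{x}, (-1)\mathbf{y}, \mathbf{z})$ with $v(\mathbf{z}) > \delta$ or $\mathbf{z} = \infty$. For representatives $x, y \in \cO$, equality $\bar x = \bar y$ in $R_\delta$ amounts to $v(x - y) > \delta$, and Propositions \ref{prop3.5}, \ref{prop4}, and \ref{prop4.5} show this is faithfully tracked by the existence of such a $\mathbf{z}$. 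Multiplication on $R_\delta$ descends from multiplication on $\RVd$, and addition from $\opd$; both are well-defined on $\sim$-classes because equivalent representatives differ by a factor in $1 + \maxm_\delta$ and thus produce $\sim$-equivalent results.

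For (3), interpret $\RVd$ as the quotient $\RVg/K$, where $K = \rvg(1 + \maxm_\delta)$ is the kernel of the natural map $\rv_{\gamma \to \delta}$. I characterize $K$ by: $\mathbf{x} \in K$ iff every $\mathbf{z}$ with $\opg(\mathbf{x}, -1, \mathbf{z})$ satisfies $v(\mathbf{z}) > \delta$ (or $\mathbf{z} = \infty$). This is what Proposition \ref{prop4.5} delivers: if $v(x - 1) \leq \gamma$ then part (i) forces all candidate $\mathbf{z}$ to share the valuation $v(x - 1)$, while if $v(x - 1) > \gamma$ then part (ii) places every candidate $\mathbf{z}$ at valuation exceeding $\gamma$, hence $\delta$. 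The predicate ``$v(\mathbf{z}) > \delta$'' inside $\RVg$ is handled by the same shifting device as in (1), applied now within $\RVg$ with a lift of $\mathbf{d}$ (which exists and is $\{\mathbf{d}\}$-definable in the multi-sorted language through the surjectivity of $\rv_{\gamma \to \delta}$). Multiplication and the partial addition on $\RVd$ descend from their counterparts on $\RVg$. The central technical obstacle throughout is the multi-valuedness of $\opd$ and $\opg$: the same inputs may admit a range of possible outputs. Propositions \ref{prop4} and \ref{prop4.5} are precisely the tools that bound this spread, ensuring the equivalence relations and induced operations respect the intended classes.
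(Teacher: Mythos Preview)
Your proof is correct and follows essentially the same approach as the paper: in each part you identify the same underlying set and equivalence relation, using $\opd$ (or $\opg$) together with the parameter $\mathbf{d}$ to recover the valuation ordering, exactly as the paper does. The only cosmetic differences are that in (1) you first define $v(\mathbf{x})>\delta$ without parameters and then shift, whereas the paper defines $v(\mathbf{x})>0$ directly via $\mathbf{d}\mathbf{x}+\mathbf{1}=\mathbf{1}$, and in (3) you phrase the equivalence via the multiplicative kernel of $\rv_{\gamma\to\delta}$ rather than the paper's additive criterion $\exists\mathbf{z}\,(v(\mathbf{z})>v(\mathbf{y})+\delta\wedge\mathbf{x}-\mathbf{y}\approx\mathbf{z})$; one small slip is your justification in (2) that ``equivalent representatives differ by a factor in $1+\maxm_\delta$,'' which describes $\rvd$-equivalence rather than your $\sim$, though the well-definedness claim itself is correct since $\rvd$-equivalent lifts in $\cO$ have the same $\res_\delta$.
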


\begin{proof}
(1): To begin, observe that $v(\bfx)>0$ is definable in $\RVd$.  Indeed, it is easily verified that
$$v(\bfx)>0 \Longleftrightarrow \mathbf{d}\bfx+\mathbf{1}=\mathbf{1}$$
(where $\mathbf{1}=\rvd(1)$).
From this it follows that $v(\bfx)=0$ is also definable:
$$v(\bfx)=0 \Longleftrightarrow \neg v(\bfx)>0 \wedge 
\ex{\bfy}{\bfx\bfy=\mathbf{1} \wedge \neg v(\bfy)>0}.$$

Now define the equivalence relation $\sim$ on $\RVd$ by
$$\bfx\sim\bfy \Longleftrightarrow 
\ex{\bfu}{v(\bfu)=0 \wedge \bfx=\bfu\bfy}.$$
Clearly, we have $\bfx\sim\bfy$ iff $v(\bfx)=v(\bfy)$, so that the equivalence classes of $\sim$ in $\RVd$ are in bijection with $V$.

Moreover, addition of $v(\bfx)+v(\bfy)$ in $V$ corresponds to the multiplication 
$\bfx\bfy$ in $\RVd$, 
and the group ordering $<$ is defined by $\bfx<\bfy$ iff 
$\bfx\neq\infty\wedge \bfx+\mathbf{d}\bfy=\bfx$.

(2): Define $\sim$ on $\RVd^+:=\set{\bfx\in\RVd}{v(\bfx)\geq 0}$ by 
$$\bfx\sim\bfy \Longleftrightarrow \ex{\bfz}{v(\bfz)>\delta\wedge \bfx-\bfy=\bfz}.$$
We leave it to the reader to confirm that elements of $R_\delta$ are in bijection with the $\sim$-equivalence classes in $\RVd^+$, with $\res_\delta(x)$ corresponding to $\rvd(x)/\sim$.
The multiplication and addition in $R_\delta$ translates directly from multiplication and addition in $\RVd$.

(3):  Considering $\mathbf{x}=\rvg(x),\mathbf{y}=\rvg(y)\in\RVg$, it will be enough to show that 
$\mathbf{x}\sim\mathbf{y} \Leftrightarrow \rvd(x)=\rvd(y)$ is definable over $\{\mathbf{d}\}$ in $\RVg$.  
Recalling \ref{prop4.5}, this follows from
$$\begin{array}{rcl}
\rvd(x)=\rvd(y) & \Longleftrightarrow & v(x-y)>v(y)+\delta \\ 
 & \Longleftrightarrow & \ex{\bfz\in\RVg}{v(\bfz)>v(\bfy)+\delta\wedge\bfx-\bfy\approx\bfz}.
 \end{array}$$
\end{proof}

In the following sections, we will only need to consider $\RVd$ when $\delta$ is the value of an integer.  Then the $\mathbf{d}$ in \ref{prop5} would always be $\emptyset$-definable, in which case the interpretations could in fact be taken over $\emptyset$.

As noted in the Introduction, in \cite{kuh1} Kuhlmann has introduced the `structures of additive and multiplicative congruences' which connect the structure of $R_\delta$ and $V$ in a similar way.
He defines, for each $\delta\geq 0$ in $V$, the system
$$K_\delta := \left(R_\delta, \RVd, \Theta_\delta(x,y)\right)$$
(whereby $\RVd$ is taken only as a multiplicative group).  
The relation $\Theta_\delta$ is defined on $R_\delta\times \RVd$ as
$$\Theta_\delta(x,y) \Leftrightarrow \exists z\in\cO\ \left(\res_\delta(z)=x \wedge \rvd(z)=y\right).$$

It can also be shown that $\RVd$ interprets $K_\delta$ and vice versa.  In fact, the two structures bear the stronger mutual relation of (quantifier-free) \emph{bi-interpretability}.  Though we find the formalism of the leading term language more convenient, the two languages should be taken as equivalent.  Cluckers and Loeser \cite{cl1} and Hrushovski and Kazhdan \cite{hk1} each work with other alternative manifestations of the leading term structures.

\subsection{Henselian fields}

The valued field $K$ is called \emph{henselian} if it satisfies

\begin{HenselLemma}
For all $P(x)\in\cO[x]$ and $a\in\cO$, if $v(P(a))>0$ and $v(P'(a))=0$, then there exists $b\in\cO$ such that $P(b)=0$ and $\bar{a}=\bar{b}$.
\end{HenselLemma}

For examples of henselian fields, in addition to the $p$-adics $\bbQ_p$ we have the Hahn fields $R((t^V))$ of Example \ref{exam1}.  This shows in particular that from an arbitrary field $R$ and ordered abelian group $V$, a henselian field can be constructed with $R$ and $V$ as residue field and value group.

It is well known that Hensel's Lemma can be reformulated to loosen the restriction on $v(P'(a))$ as in the following proposition.  See \cite{rib2} for a proof, as well as a thorough exposition of other equivalent forms of Hensel's Lemma.

\begin{prop}
\label{prop6}
Suppose $K$ is henselian, $P(x)\in\cO[x]$ and $a\in\cO$.  If $v(P(a))>2v(P'(a))$, then there exists $b\in\cO$ such that $P(b)=0$ and $\bar{a}=\bar{b}$.
\qed
\end{prop}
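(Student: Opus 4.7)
The plan is to reduce the strengthened statement to the unstrengthened form of Hensel's Lemma given at the top of the subsection by means of a Newton-style change of variable. Set $c:=P'(a)$. Since $P\in\cO[x]$ and $a\in\cO$, all Taylor coefficients $\frac{1}{i!}P^{(i)}(a)$ lie in $\cO$; in particular $v(c)\geq 0$. The substitution to try is $x=a+cy$, producing a new polynomial
$$Q(y):=\frac{1}{c^2}P(a+cy)=\frac{P(a)}{c^2}+y+\sum_{i\geq 2}\frac{P^{(i)}(a)}{i!}\,c^{i-2}y^i.$$

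First I would check that $Q(y)\in\cO[y]$: the constant term has valuation $v(P(a))-2v(c)>0$ by the hypothesis, the linear coefficient is exactly $1$, and for $i\geq 2$ the coefficient $\frac{P^{(i)}(a)}{i!}c^{i-2}$ has valuation $\geq(i-2)v(c)\geq 0$. Then $Q(0)\in\maxm$ and $Q'(0)=1$ has residue $1$, so the unstrengthened Hensel's Lemma applies to $Q$ at $0$: there is $y_0\in\cO$ with $Q(y_0)=0$ and $\bar{y}_0=0$, i.e.\ $v(y_0)>0$.

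Setting $b:=a+cy_0\in\cO$, we get $P(b)=c^2 Q(y_0)=0$, and $v(b-a)=v(c)+v(y_0)>0$, so $\bar{b}=\bar{a}$, as required.

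The only substantive thing to verify is that the transformed polynomial $Q$ has integral coefficients, which is really just bookkeeping in the Taylor expansion and uses the hypothesis $v(P(a))>2v(P'(a))$ exactly once (to handle the constant term); the rest of the argument is a direct invocation of Hensel's Lemma in its given form. If one wanted the finer statement $v(b-a)>v(P'(a))$ one would also read this off from $v(b-a)=v(c)+v(y_0)$, but for the conclusion $\bar a=\bar b$ stated here it suffices that $v(c)\geq 0$ and $v(y_0)>0$.
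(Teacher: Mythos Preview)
Your proof is correct; the Newton-style substitution $x=a+P'(a)y$ is the standard reduction to the basic form of Hensel's Lemma. The paper does not actually prove this proposition---it simply cites Ribenboim's survey \cite{rib2} and marks the statement with \qedsymbol---so you have supplied a self-contained argument where the paper defers to the literature.

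One small point you might make explicit: the division by $c^2$ presupposes $c=P'(a)\neq 0$. This is implicit in the hypothesis, since $v(P(a))>2v(P'(a))$ is vacuous when $v(P'(a))=\infty$, but it is worth a word. Otherwise your bookkeeping on the Taylor coefficients (in particular that $\frac{1}{i!}P^{(i)}(a)\in\cO$ because these are $\bbZ$-polynomial in $a$ and the coefficients of $P$) is exactly what is needed, and your closing remark that the same computation yields the sharper $v(b-a)>v(P'(a))$ is a nice observation foreshadowing Proposition~\ref{prop7}.
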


Both of these guarantee the existence of a root $b$ of $P$ close to the `approximate root' $a$, in the sense that $v(a-b)>0$.  In working with the leading term structures, it will be desirable to refine the conclusion that $a$ and $b$ have the same residue to give $\rvd(a)=\rvd(b)$ (note that $\bar{a}=\bar{b}$ implies $\rv(a)=\rv(b)$ only when $v(a)=0$).  A sharper result on the proximity of the approximate root to an actual root is obtained in

\begin{prop}
\label{prop7}
Suppose $K$ is henselian, $P(x)\in\cO[x]$, $a\in\cO$, and $0\leq\delta\in V$.  If $v(P(a))>2v(P'(a))+\delta$, then there exists $b\in\cO$ such that $P(b)=0$ and $v(a-b)>\delta$.
\end{prop}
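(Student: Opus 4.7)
The plan is to refine Hensel's Lemma by performing an explicit Newton-style change of variables that scales the approximation error down by a controlled factor. If $P(a)=0$ we just take $b=a$, so assume $P(a) \neq 0$; note that the hypothesis forces $P'(a) \neq 0$ as well. Set
$$t := \frac{P(a)}{P'(a)}, \qquad Q(y) := \frac{P(a+ty)}{P(a)} \in K[y].$$
Since $P \in \cO[x]$ and $a \in \cO$, we have $v(P'(a)) \geq 0$, and so the hypothesis $v(P(a))>2v(P'(a))+\delta$ gives $v(t) = v(P(a))-v(P'(a)) > v(P'(a))+\delta \geq \delta$. The strategy is to apply the classical Hensel's Lemma (Proposition \ref{prop6}) to $Q$ at the approximate root $-1$, then pull the resulting root back to $K$.

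The first step is to expand $Q$ using the Taylor formula. Writing $P(a+h)=P(a)+P'(a)h+\sum_{i\geq 2}(P^{(i)}(a)/i!)\,h^i$ and dividing by $P(a)$, one obtains
$$Q(y) = 1 + y + \sum_{i\geq 2} c_i y^i,\qquad c_i = \frac{P^{(i)}(a)\,t^i}{i!\,P(a)}.$$
Since $P^{(i)}/i!$ has coefficients in $\cO$, we have $v(P^{(i)}(a)/i!)\geq 0$, and a short calculation using the hypothesis yields $v(c_i) > \delta$ for every $i \geq 2$. In particular $Q \in \cO[y]$.

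With this bound in hand, the remaining steps are routine. Evaluating at $y=-1$ gives $v(Q(-1)) \geq \min_i v(c_i) > \delta \geq 0$, while $v(Q'(-1)-1) > 0$ shows $v(Q'(-1))=0$. Hensel's Lemma then produces $y_0 \in \cO$ with $Q(y_0)=0$ and $\overline{y_0}=-1$; in particular $v(y_0)=0$. Setting $b := a + t y_0$ we have $P(b) = P(a)\,Q(y_0) = 0$, and
$$v(b-a) = v(t) + v(y_0) = v(t) > \delta,$$
as required. Finally $v(b)\geq \min\{v(a),v(t y_0)\} \geq 0$, so $b \in \cO$.

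The only substantive obstacle is the coefficient estimate $v(c_i) > \delta$ for $i\geq 2$. This is where the quadratic improvement built into the hypothesis $v(P(a))>2v(P'(a))+\delta$ is used: it ensures that the Newton rescaling by $t$ raises the relative size of every higher-order Taylor term by an amount exceeding $\delta$, which is precisely what is needed to make $Q$ satisfy the hypotheses of the classical Hensel's Lemma while simultaneously guaranteeing the improved proximity $v(b-a)=v(t) > \delta$.
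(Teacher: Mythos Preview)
Your proof is correct and takes a genuinely different route from the paper's. The paper argues by induction on $\deg(P)$: it first invokes Proposition~\ref{prop6} to obtain some root $b$ with $\bar a=\bar b$, factors $P(x)=(x-b)R(x)$, and then splits into cases according to whether $v(R(a))\leq v(a-b)+v(R'(a))$; in one case the desired estimate $v(a-b)>\delta$ falls out directly, in the other the hypothesis descends to $R$ and the induction produces a different root $c$ with $v(a-c)>\delta$.

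Your argument avoids induction entirely by the Newton rescaling $Q(y)=P(a+ty)/P(a)$ with $t=P(a)/P'(a)$. The key point, which you identify, is the estimate $v(c_i)>\delta$ for $i\geq 2$ (this follows from $i\,v(t)-v(P(a))=(v(P(a))-2v(P'(a)))+(i-2)v(t)>\delta$ using $v(t)>0$), which simultaneously places $Q$ in $\cO[y]$, makes $-1$ a simple approximate root in the sense of the basic Hensel's Lemma, and guarantees $v(b-a)=v(t)>\delta$. This is more direct and arguably more transparent: it shows in one stroke that the root produced by the Newton step already satisfies the sharper proximity bound, whereas the paper's inductive argument may have to pass to a root of a lower-degree factor. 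One minor remark: you only need the original Hensel's Lemma (with $v(Q'(-1))=0$), not the stronger Proposition~\ref{prop6}, though of course either suffices.
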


\begin{proof}
By induction on $d:=\deg(P)$.  Let $b$ be the root of $P$ given by Proposition \ref{prop6}, and factor $P(x)=(x-b)R(x)$.  We have
$$v(P(a))=v(a-b)+v(R(a)),$$
$$v(P'(a))=v((a-b)R'(a)+R(a))\geq\min\left\{v(a-b)+v(R'(a)),v(R(a))\right\}.$$

Assume first that $v(R(a))\leq v(a-b)+v(R'(a))$.  Then $v(P(a))>2v(P'(a))+\delta$ gives
$$v(a-b)+v(R(a))>2v(R(a))+\delta,$$
whence $v(a-b)>v(R(a))+\delta\geq\delta$.

If on the other hand $v(a-b)+v(R'(a))<v(R(a))$, $v(P'(a))=v(a-b)+v(R'(a))$ implies
$$v(R(a))>2v(R'(a))+v(a-b)+\delta>2v(R'(a))+\delta.$$
Now the induction gives a root $c$ of $R$, and so also of $P$, such that $v(a-c)>\delta$.
\end{proof}

Therefore, to produce a root $b$ with $\rvd(a)=\rvd(b)$, it would suffice to require that $v(P(a))>2v(P'(a))+\gamma$ with $\gamma$ at least $v(a)+\delta$.

\section{Decomposition}
\label{decomposition}

\subsection{Collisions}

From now on, the valued field $K$ is assumed to be henselian and of characteristic $0$.  The residue field may have positive characteristic, though the results generally take a simpler form in the pure characteristic $0$ case.

The goal being to investigate definability in $K$ through the leading term structures,
this would be trivial if we could simply say for $f(x)\in K[x]$ that $\rv(f(x))=f(\rv(x))$.
However, as seen in Proposition \ref{prop3}, this is not always the case.  For example, 
$\rv(x^2+a)$ is identically equal to $\rv(x)^2+\rv(a)$ only when the sum is well-defined.  Difficulties arise wherever $x^2$ and $a$ `collide' to make $v(x^2+a)>\min\{v(x^2),v(a)\}$.

Our strategy is to partition $K$ so that on each piece of the partition, 
$v(f(x))$ reduces to a simple form and $\rvd(f(x))$ can be analyzed within $\RVd$ in linear terms as a function of $\rvd(x-\alpha)$ for some $\alpha\in K$.

\begin{defn}
\label{defcollision}
Say $f(x)=\sum\limits_{i=0}^da_i(x-\alpha)^i$ 
has a \emph{collision at $\beta$ around $\alpha$} if 
$v(f(\beta))>\min\limits_{i\leq d}\{ v(a_i(\beta-\alpha)^i)\}$.
In this case, the \emph{severity} of the collision is the value
$$v(f(\beta))-\min\limits_{i\leq d}\{ v(a_i(\beta-\alpha)^i)\}.$$
\end{defn}

Note that it is impossible for a polynomial to have a collision at $\alpha$ around $\alpha$,even if $\alpha$ is a root of $f(x)$.  On the other hand, if $\alpha\neq\beta$, then $f(\beta)=0$ iff $f(x)$ has a collision of \emph{infinite} severity at $\beta$ around $\alpha$.

As mentioned above, by Proposition \ref{prop3.5} for any $\beta$ where $f(x)$ does \emph{not} have a collision,
$$\rvd(f(\beta))=\sum\limits_{i=0}^d\rvd(a_i)\rvd(\beta-\alpha)^i$$ 
is well-defined.
Accordingly, the existence of a collision at $\beta$ around $\alpha$ depends only on $\rv(\beta-\alpha)$.

In fact, we can go further by locating collisions near roots of the derivatives of $f(x)$.  Here let us introduce the convention that if $\deg(f)=d$ then by the \emph{derivatives of $f(x)$} we mean $f$, $f'$, \ldots, and $f^{(d)}$, notably including $f$ itself as the `$0^{\text{th}}$ derivative'.

By scaling $f(x)\in K[x]$ to obtain a polynomial $P(x)$ over $\cO$, it is possible to transfer Hensel's Lemma to polynomials over the field rather than only the valuation ring, with collisions filling the role of the conditions on the valuation of $P$.  The following can be seen as a further generalization of the Hensel property along these lines, giving a root of a derivative of $f$ wherever $f$ has a collision exceeding a bound on the severity.

\begin{prop}
\label{prop21}
Let $\alpha\in K$ and $f(x)=\sum\limits_{i=0}^da_i(x-\alpha)^i$.  Suppose moreover that $f$ has a collision at $\beta$ around $\alpha$ of severity $\ep > 2^m(v(m!)+\delta)$, where 
$$m=\max\set{i\leq d}{\all{j\leq d}{v(a_i(\beta-\alpha)^i)\leq v(a_j(\beta-\alpha)^j)}}.$$
Then there is a $\lambda\in K$ and $n<m$ such that $f^{(n)}(\lambda)=0$ and $\rvd(\lambda-\alpha)=\rvd(\beta-\alpha)$.
\end{prop}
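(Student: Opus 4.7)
The plan is to argue by induction on $m$, after first translating to reduce to the case $\alpha = 0$ (so $f(x) = \sum a_i x^i$ has a collision at $\beta$ around $0$). The central device is to rescale the variable so that a suitable approximate root lies in $\cO$ and Proposition~\ref{prop7} can be applied. Set $P(x) := c^{-1} f(\beta x)$ for some $c \in K$ with $v(c) = v(a_m \beta^m)$; then $P \in \cO[x]$, the coefficient of $x^m$ in $P$ is a unit, and the collision hypothesis translates to $v(P(1)) = \ep$, while a direct computation gives $v(P'(1)) = v(f'(\beta)) - v(a_m \beta^{m-1})$.

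In the base case $m = 1$, every monomial contributing to $f'(\beta)$ other than $a_1$ has valuation strictly greater than $v(a_1)$, so $v(f'(\beta)) = v(a_1)$ and hence $v(P'(1)) = 0$. Since $\ep > 2\delta \geq \delta$, Proposition~\ref{prop7} applied to $P$ at $1$ with threshold $\delta$ yields $b \in \cO$ with $P(b) = 0$ and $v(b-1) > \delta$. Then $\lambda := \beta b$ is a root of $f$ with $v(\lambda - \beta) = v(\beta) + v(b-1) > v(\beta) + \delta$, and $\rvd(\lambda) = \rvd(\beta)$ follows from Proposition~\ref{prop1}, completing the case with $n = 0 < 1 = m$.

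For the inductive step $m \geq 2$, I split on whether the hypothesis of Proposition~\ref{prop7} holds for $P$ at $1$ with threshold $\delta$. If $\ep > 2 v(P'(1)) + \delta$, the base-case argument directly yields $\lambda$ and the conclusion holds with $n = 0$. Otherwise $v(P'(1)) \geq (\ep - \delta)/2$, equivalently $v(f'(\beta)) \geq v(a_m\beta^{m-1}) + (\ep - \delta)/2$. Since the $j = m - 1$ monomial of $f'$ at $\beta$ has valuation $v(m) + v(a_m\beta^{m-1})$, which upper-bounds the minimum monomial valuation of $f'$ at $\beta$, the polynomial $f'$ has a collision at $\beta$ around $0$ of severity at least $(\ep - \delta)/2 - v(m)$. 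The associated index $m'$ for $f'$ satisfies $m' \leq m - 1$, and one verifies, using the identity $v(m!) = v((m-1)!) + v(m)$ together with the hypothesis $\ep > 2^m(v(m!) + \delta)$, that this severity exceeds $2^{m'}(v(m'!) + \delta)$. Applying the induction hypothesis to $f'$ produces $\lambda$ and $n' < m'$ with $f^{(n'+1)}(\lambda) = (f')^{(n')}(\lambda) = 0$ and $\rvd(\lambda) = \rvd(\beta)$; setting $n := n' + 1 \leq m' \leq m - 1 < m$ finishes the induction.

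The main technical obstacle is checking the severity bound for $f'$ in the case where Proposition~\ref{prop7} fails on $P$: the factor $2^m$ in the hypothesis is calibrated precisely so that the approximate halving of the severity under Hensel failure still leaves enough slack for the induction, while the $v(m!)$ term is needed to absorb contributions from indices divisible by the residue characteristic, which in positive residue characteristic can push the minimum-valuation monomial of $f'$ at $\beta$ strictly above $v(a_m\beta^{m-1})$ and shift $m'$ strictly below $m-1$. Careful bookkeeping of the $v(j)$ contributions for $j \leq m$ is what makes the induction close cleanly in this setting.
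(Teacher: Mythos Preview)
Your inductive scheme has two gaps. First, the claim $m' \leq m-1$ for $f'$ can fail in positive residue characteristic: writing $f' = \sum_j b_j x^j$ with $b_j = (j+1)a_{j+1}$, the index $m'$ is governed by the minimum of $v(j+1) + v(a_{j+1}\beta^{j+1})$, and when $v(m) > 0$ nothing prevents this minimum from being attained at some $j \geq m$. Concretely, in $\bbQ_p$ with $\alpha = 0$ and $\beta = 1$, take $f(x) = (-1+p^N) + (p+1)x^p - p\, x^{p+1}$; then $m = p$, the collision at $1$ has severity $N$, and $f'(x) = p(p+1)x^{p-1}(1-x)$ has $v(b_{p-1}) = v(b_p) = 1$, so $m' = p = m$. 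Since moreover $f'(1) = 0$, you are forced into your Case~2 here, and the induction on $m$ stalls. Second, even granting $m' \leq m-1$, the arithmetic does not close. Your lower bound on the severity of the collision of $f'$ is (in the divisible hull) $(\ep-\delta)/2 - v(m)$, and to feed the induction you need this to exceed $2^{m'}(v(m'!)+\delta)$. In residue characteristic zero one always has $m'=m-1$; substituting $\ep > 2^m(v(m!)+\delta)$ and $v(m!) = v((m-1)!)+v(m)$ then reduces the required inequality to $(2^m-2)v(m) \geq \delta$, which fails whenever $v(m)=0$ and $\delta>0$. The point is that each inductive step costs roughly $\delta/2$, and the $2^m\delta$ in the hypothesis is calibrated for a single telescoping chain of length $m$, not for $m$ successive halvings each carrying its own $\delta$-loss.

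The paper's proof sidesteps both problems by replacing the induction with a one-shot pigeonhole. After the same rescaling to $P\in\cO[x]$ with $v(P(1))=\ep$, one computes $v(P^{(m)}(1)) = v(m!)$ \emph{exactly} (the degree-$m$ coefficient of $P$ is a unit while all higher-degree coefficients have strictly positive valuation, so the $i=m$ term dominates in $P^{(m)}(1)$). Then if $v(P^{(n)}(1)) \leq 2v(P^{(n+1)}(1)) + \delta$ held for every $n<m$, telescoping would give $\ep = v(P(1)) \leq 2^m v(m!) + (2^m-1)\delta$, contradicting the hypothesis. Hence some $n<m$ satisfies the hypothesis of Proposition~\ref{prop7} for $P^{(n)}$ at $1$, and a single application of that proposition produces the root $u$ with $\rvd(u)=\rvd(1)$, from which $\lambda = (\beta-\alpha)u + \alpha$ is read off.
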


\begin{proof}
Note first that $\beta\neq\alpha$, since otherwise $v(f(\beta))=v(a_0)$ and $f$ cannot have a collision at $\beta$.  

Define $\sigma:=a_m(\beta-\alpha)^m$ and
$$P(x):=\frac{f((\beta-\alpha)x+\alpha)}{\sigma}=\frac{1}{\sigma}\sum\limits_{i=0}^d a_i(\beta-\alpha)^ix^i.$$
So, $P(x)\in\cO[x]$ and $v(P(1))=\ep$.

Consider $P^{(m)}(1)$.  Since
$$P^{(m)}(1)=\frac{1}{\sigma}\sum_{i=m}^d\frac{i!}{(i-m)!}a_i(\beta-\alpha)^i$$
for $i=m$ we have
\begin{equation*}
v\left(\frac{1}{\sigma}\frac{i!}{(i-m)!}a_i(\beta-\alpha)^i\right)=v\left(\frac{m!}{\sigma}a_m(\beta-\alpha)^m\right)=v(m!)
\end{equation*}
while for $i>m$,
\begin{equation}
\label{eqn22}
v\left(\frac{1}{\sigma}\frac{i!}{(i-m)!}a_i(\beta-\alpha)^i\right)=
v\left(\frac{i!}{(i-m)!}\right)+v\left(a_i(\beta-\alpha)^i\right)-v\left(a_m(\beta-\alpha)^m\right).
\end{equation}
Since $m!$ divides $i!/(i-m)!$, and $v(a_i(\beta-\alpha)^i)>v(a_m(\beta-\alpha)^m)$ by maximality of $m$, the quantity in (\ref{eqn22}) is greater than $v(m!)$.
Thus we conclude that 
$$v\left(P^{(m)}(1)\right)=v(m!).$$

Now, from 
$$v(P(1))>2^m\left(v(m!)+\delta\right)=2^m\left(v(P^{(m)}(1))+\delta\right)$$
we must have for some $n< m$
$$v\left(P^{(n)}(1)\right)>2\left(v(P^{(n+1)}(1))+\delta\right).$$

Proposition \ref{prop7} now gives $u\in\cO$ with $P^{(n)}(u)=0$ and $\rvd(u)=\rvd(1)$.  Set $\lambda:=(\beta-\alpha)u+\alpha$.  Since $\beta-\alpha\neq 0$, it follows from
$$P^{(n)}(u)=\frac{(\beta-\alpha)^n}{\sigma}f^{(n)}(\lambda)=0$$
that $f^{(n)}(\lambda)=0$.  Finally, $\rvd(\lambda-\alpha)=\rvd(u(\beta-\alpha))=\rvd(\beta-\alpha)$, as required.
\end{proof}

\subsection{The decomposition}

Like the $m$ in the proof of Proposition \ref{prop21}, 
we will frequently need to refer to the largest degree term carrying the smallest valuation.  
Therefore define
\begin{equation}
\label{dfnm}
m(f,\alpha,S):=\max\left\{i\leq d\mid \exists x\in S\ \forall j\leq d\ \left(v\left(a_i(x-\alpha)^i\right)\leq v\left(a_j(x-\alpha)^j\right)\right)\right\}
\end{equation}
where as before the $a_i$ are the coefficients of the expansion of $f(x)$ around $\alpha$, $f(x)=\sum a_i(x-\alpha)^i$.
Thus, $m(f,\alpha,S)$ is the highest power term in $f$ centered at $\alpha$ which can have minimal valuation (among the other terms of $f$) on $S$.

\begin{prop}
\label{prop22}
If $f(x)\in K[x]$, $\beta\in S$, $v(\beta-\alpha)=\delta$, and $T\subseteq B_{\geq\delta}(\beta)$, then $m(f,\beta,T)\leq m(f,\alpha,S)$.
\end{prop}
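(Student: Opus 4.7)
The plan is to use $\beta$ itself as the witness in $S$. Writing $f(x)=\sum a_i(x-\alpha)^i$ and $f(x)=\sum b_j(x-\beta)^j$ for the Taylor expansions around the two centers, I would let $m' := m(f,\beta,T)$ with witness $y\in T$ satisfying $v(b_{m'}(y-\beta)^{m'}) \leq v(b_j(y-\beta)^j)$ for every $j\leq d$, and let $i^*$ be the largest index at which $v(a_{i^*}(\beta-\alpha)^{i^*})$ equals $\min_j v(a_j(\beta-\alpha)^j)$. Since $\beta\in S$ already witnesses $m(f,\alpha,S)\geq i^*$, the whole task reduces to proving $i^*\geq m'$.

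For this I would work with the binomial identity $b_j = \sum_{k\geq j}\binom{k}{j}a_k(\beta-\alpha)^{k-j}$; using $v\bigl(\binom{k}{j}\bigr)\geq 0$, this gives the a priori bound $v(b_j(y-\beta)^j)\geq \min_{k\geq j}v(a_k(\beta-\alpha)^k)+j(\mu-\delta)$, where $\mu := v(y-\beta)\geq\delta$ — this is precisely where the hypothesis $T\subseteq B_{\geq\delta}(\beta)$ enters. At $j=i^*$ the inequality is actually an equality: the $k=i^*$ summand of $b_{i^*}$ has valuation exactly $v(a_{i^*})$, and every $k>i^*$ summand is strictly worse by the maximality of $i^*$, so $v(b_{i^*}(y-\beta)^{i^*}) = v(a_{i^*}(\beta-\alpha)^{i^*})+i^*(\mu-\delta)$. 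If $m'>i^*$, the same maximality argument upgrades the a priori $\geq$ at $j=m'$ to a strict $>$, yielding $v(b_{m'}(y-\beta)^{m'}) > v(a_{i^*}(\beta-\alpha)^{i^*})+m'(\mu-\delta)$.

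Assuming $i^*<m'$ for contradiction, combining these two estimates with $m'(\mu-\delta)\geq i^*(\mu-\delta)$ (which uses $\mu\geq\delta$) gives $v(b_{m'}(y-\beta)^{m'}) > v(b_{i^*}(y-\beta)^{i^*})$, contradicting $y$'s role as a witness for $m'$. Hence $i^*\geq m'$ and the proof is complete. The main subtlety will be the careful bookkeeping of strict versus non-strict inequalities: the argument hinges on distinguishing the lead $k=i^*$ term in $b_{i^*}$ (which dominates uniquely, producing an equality) from the situation in $b_{m'}$ (where maximality of $i^*$ turns the $\geq$ into a $>$). The residue characteristic plays no role beyond $v\bigl(\binom{k}{j}\bigr)\geq 0$, so the argument works uniformly.
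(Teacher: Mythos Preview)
Your proof is correct and takes a genuinely different route from the paper's. The paper argues via rescaling: with $n$ denoting your $i^*$, it sets $\sigma = a_n(\beta-\alpha)^n$ and forms $P_\alpha(x) := f((\beta-\alpha)x+\alpha)/\sigma$ and $P_\beta(x) := f((\beta-\alpha)x+\beta)/\sigma$. Both lie in $\cO[x]$, and since $P_\beta(x) = P_\alpha(x+1)$ is a translate, their reductions have the same degree, namely $n$; this immediately gives $v(b_n)+n\delta < v(b_i)+i\delta$ for every $i>n$, from which $m(f,\beta,T)\leq n$ follows by the same monotonicity in $v(\zeta-\beta)\geq\delta$ that you exploit. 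Your argument instead unpacks the Taylor-coefficient identity $b_j=\sum_{k\geq j}\binom{k}{j}a_k(\beta-\alpha)^{k-j}$ and tracks valuations term by term, arriving at the same key facts --- $v(b_{i^*})=v(a_{i^*})$ exactly, and a strict lower bound on $v(b_j)$ for $j>i^*$ --- by direct estimation rather than via residue degrees. The paper's version is more conceptual (a shift over the residue field preserves degree), while yours is more elementary and self-contained, requiring no passage to $\cO[x]$ or to residues; the two are equally valid and essentially dual ways of reading off the same inequality.
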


\begin{proof}
Let 
$f(x)=\sum\limits_{i=0}^da_i(x-\alpha)^i=\sum\limits_{i=0}^db_i(x-\beta)^i$
and 
$$n:=\max\set{i\leq d}{\all{j\leq d}{v(a_n\left(\beta-\alpha)^n\right)\leq v\left(a_j(\beta-\alpha)^j\right)}}$$ 
(so $n\leq m(f,\alpha,S)$).  Define also $\sigma:=a_n(\beta-\alpha)^n$.

Like in Proposition \ref{prop21}, from $f(x)$ we define the polynomials
$$P_\alpha(x):=\frac{f((\beta-\alpha)x+\alpha)}{\sigma}=\frac{1}{\sigma}\sum\limits_{i=0}^da_i(\beta-\alpha)^ix^i$$
$$P_\beta(x):=\frac{f((\beta-\alpha)x+\beta)}{\sigma}=\frac{1}{\sigma}\sum\limits_{i=0}^db_i(\beta-\alpha)^ix^i$$
so that $P_\alpha\in\cO[x]$ and $\deg\left(\res\left(P_\alpha\right)\right)=n$.  

Furthermore, since $P_\alpha(x+1)=P_\beta(x)$, $P_\beta(x)\in\cO[x]$ and $\deg\left(\res\left(P_\beta\right)\right)=\deg\left(\res\left(P_\alpha\right)\right)=n$ as well.
This implies that
\begin{equation}
\label{eqn221}
0=v(b_n)+n\delta=v\left(b_n(\beta-\alpha)^n\right)<v\left(b_i(\beta-\alpha)^i\right)=v(b_i)+i\delta
\end{equation}
for $i>n$.

Now, taking any $\zeta\in T$, $v(\zeta-\beta)\geq\delta$ combined with (\ref{eqn221}) gives
$$v\left(b_n(\zeta-\beta)^n\right)<v\left(b_i(\zeta-\beta)^i\right)$$
for all $i>n$.  Therefore we have $m(f,\beta,T)\leq n \leq m(f,\alpha,S)$.
\end{proof}

The partition of $K$ is made up of \emph{swiss cheeses}.  Recall that a swiss cheese is a set of the form $B\setminus\left(C_1\cup\ldots\cup C_n\right)$, where $B$ and each $C_i$ are (open or closed) balls, including $K$ itself as well as singletons.  A key property is that the intersection of two swiss cheeses is again a swiss cheese.

\begin{prop}
\label{prop23}
Let $f(x)\in K[x]$ and $S$ be a swiss cheese in $K$.  Then there exist (disjoint) sub-swiss cheeses $T_1,\ldots,T_k\subseteq S$ and $\alpha_1,\ldots,\alpha_k\in K$ such that 
$$S=\bigcup\limits_{i=1}^kT_i$$ 
and for all $x\in T_i$, 
$$v\left(a_{im_i}(x-\alpha_i)^{m_i}\right)\leq v(f(x))\leq v\left(a_{im_i}(x-\alpha_i)^{m_i}\right)+2^{m_i}v(m_i!)$$
where $f(x)=\sum\limits_{n=0}^d a_{in}(x-\alpha_i)^n$ and $m_i=m(f,\alpha_i,T_i)$.

Furthermore the $\alpha_i$ can be chosen from among the roots of the derivatives of $f(x)$.
\end{prop}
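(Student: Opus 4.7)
The plan is to partition $S$ by Voronoi cells around the roots in $K$ of the derivatives $f, f', \ldots, f^{(d)}$, and then refine each cell according to which term of the Taylor expansion carries the smallest valuation. For $d \geq 1$, the linear polynomial $f^{(d-1)}$ always supplies at least one such root (the $d = 0$ case is trivial); enumerate all of these roots as $\lambda_1, \ldots, \lambda_N$.

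For each $i$, set
\[
U_i := \{x \in S : v(x-\lambda_i) > v(x-\lambda_j) \text{ for } j<i, \ v(x-\lambda_i) \geq v(x-\lambda_j) \text{ for } j>i\},
\]
so the $U_i$ disjointly cover $S$. The ultrametric identity $v(x-\lambda_i) > v(x-\lambda_j) \Leftrightarrow x \in B_{>v(\lambda_i-\lambda_j)}(\lambda_i)$ makes each $U_i$ the intersection of $S$ with a single ball about $\lambda_i$ minus finitely many balls about the other $\lambda_j$, hence a swiss cheese. Next, on each nonempty $U_i$ expand $f(x) = \sum_{k=0}^d a_{ik}(x-\lambda_i)^k$ and set
\[
T_{i,j} := \{x \in U_i : v(a_{ij}(x-\lambda_i)^j) \leq v(a_{ik}(x-\lambda_i)^k) \text{ for all } k, \text{ strictly for } k > j\}.
\]
Each defining condition is a one-sided bound on $v(x-\lambda_i)$, so $T_{i,j}$ meets $U_i$ in an annulus about $\lambda_i$ and is again a swiss cheese. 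By construction, the minimum of $v(a_{ik}(x-\lambda_i)^k)$ is achieved at $k = j$ and strictly not above for every $x \in T_{i,j}$, which pins $m(f,\lambda_i,T_{i,j})=j$ exactly.

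Choosing $\alpha_i := \lambda_i$ on each nonempty $T_{i,j}$, the lower inequality $v(a_{ij}(x-\lambda_i)^j) \leq v(f(x))$ is immediate from the ultrametric. For the upper inequality, suppose for contradiction some $x \in T_{i,j}$ has a collision around $\lambda_i$ of severity $> 2^j v(j!)$. Applying Proposition \ref{prop21} with $\delta = 0$ and $m = j$ produces some $n < j$ and $\lambda \in K$ with $f^{(n)}(\lambda) = 0$ and $\rv(\lambda - \lambda_i) = \rv(x - \lambda_i)$, which forces $v(x - \lambda) > v(x - \lambda_i)$. But $\lambda$ is a root of a derivative of $f$, so it equals some $\lambda_k$, contradicting the maximality condition built into $U_i$. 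Relabeling the nonempty $T_{i,j}$ as $T_1, \ldots, T_k$ with their $\alpha_i$'s delivers the claimed partition.

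The main obstacle is orchestrating the two-stage refinement so that the integer $m$ appearing in Proposition \ref{prop21} matches $m(f, \lambda_i, T_{i,j})$ on the nose; the second partitioning step is engineered precisely to force $m = j$, which is what makes the concluding contradiction land on the Voronoi condition from the first step. Once this alignment is in place, verifying that both refinement stages preserve the swiss cheese property is a routine consequence of the ultrametric identities for containment of balls.
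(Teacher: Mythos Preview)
Your argument is correct and takes a genuinely different route from the paper's own proof.

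The paper proceeds by a double induction: primarily on $m(f,\alpha,S)$, secondarily on the number of roots of derivatives of $f$ lying in $S$. Fixing one root $\alpha$, it isolates the annular region where the degree-$m$ term of the expansion about $\alpha$ is minimal, handles the critical boundary annulus by invoking Proposition~\ref{prop21} to place a derivative root inside each offending $\rv$-class, and then recurses on the resulting smaller balls (which carry either a smaller $m$ or strictly fewer roots). Your approach instead lays down the full ultrametric Voronoi partition with respect to \emph{all} the roots $\lambda_1,\ldots,\lambda_N$ at once, refines each cell by dominant term so that $m(f,\lambda_i,T_{i,j})=j$ is forced pointwise, and invokes Proposition~\ref{prop21} only once at the end, where the root it produces immediately contradicts Voronoi maximality. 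This is more direct and eliminates the induction entirely; the paper's recursive construction, by contrast, exhibits the decomposition as a nested refinement, which makes the effectivity claim used later for the relative decision procedure somewhat more transparent and tracks more closely how the pieces arise from one another. Both arguments ultimately rest on the same use of Proposition~\ref{prop21}; your two-stage refinement is precisely what the paper's induction accomplishes incrementally.
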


\begin{proof}
To begin, choose any root $\alpha$ of a derivative of $f$, and let $f(x)=\sum_{n=0}^da_i(x-\alpha)^i$.  For simplicity, assume that $S$ is a ball $\Bga$.  No generality is lost as a decomposition for $\Bga\supseteq S$ may simply be intersected with $S$ to get the desired result.  In particular, $\alpha\in S$.

The proof proceeds by a double induction, first on $m(f,\alpha,S)$ and then on the number of roots of derivatives of $f$ contained in $S$.  Clearly, if $m(f,\alpha,S)=0$, then $v(f(x))=v(a_0)$ for all $x\in S$.

Now suppose $m(f,\alpha,S)=m$.  Let 
$$D:=\set{\delta\geq\gamma}{\all{i\leq m}{v(a_m)+m\delta \leq v(a_i)+i\delta}}.$$
In other words, $m(f,\alpha,S)=m$ when $v(a_m(x-\alpha)^m)$ is minimal \emph{somewhere} in $S$ (within the set $\set{v(a_i(x-\alpha)^i)}{0\leq i\leq d}$), while $D$ gives those values where it actually \emph{is} minimal.  Define also
$$B_D:=\set{x\in S}{v(x-\alpha)\in D}.$$

$D$ is an initial segment of $[\gamma,\infty)$.  
Indeed, if $\gamma\leq \ep<\delta\in D$ and $i<m$, then
$v(a_i)+i\delta \geq v(a_m)+m\delta$ implies
\begin{equation}
\label{eqn23}
v(a_i)+i\ep>v(a_m)+m\ep,
\end{equation}
so $\ep\in D$ as well.
We need not consider $i>m$, by the maximality of $m$.

In particular, the inequality in (\ref{eqn23}) becomes strict for $\ep<\delta$.
Therefore we have also shown that if $\ep\in D$ is not a maximal element of $D$, 
then for all $x$ such that $v(x-\alpha)=\ep$, 
$$v(f(x))=v(a_m(x-\alpha)^m).$$

This already suffices to prove the claim if $D=[\gamma,\infty)$, so we assume that $D$ is in fact a proper initial segment.
In this case, there is some $i<m$ such that $v(a_m(x-\alpha)^m)>v(a_i(x-\alpha)^i)$ whenever $v(x-\alpha)>\delta$ for every $\delta\in D$.  
Set $\eta :=v(a_i)-v(a_m)$ and note that 
$$S\setminus B_D=\Bga\setminus B_D=B_{>\eta/(m-i)}(\alpha)$$
since for $x\in S$,
\begin{align*} 
x\notin B_D &\Leftrightarrow v(a_i(x-\alpha)^i)<v(a_m(x-\alpha)^m)\\ 
 &\Leftrightarrow v(a_i)-v(a_m)<(m-i)v(x-\alpha).
\end{align*}

Therefore, so far we have:
\begin{enumerate}[(i)] 
\item
if $x\in B_D=\Bga\setminus B_{\geq \eta/(m-i)}(\alpha)$, but $v(x-\alpha)$ is not maximal in $D$, then $v(f(x))=v(a_m(x-\alpha)^m)$ by (\ref{eqn23});
\item
if $x\in S\setminus B_D=B_{>\eta/(m-i)}(\alpha)$, then $m(f,\alpha,B_{>\eta/(m-i)}(\alpha))<m$ and the induction hypothesis applies.
\end{enumerate}
Note that the existence of a maximal element of $D$ depends on the divisibility of $\eta$ by $m-i$ in $V$, but regardless, as mentioned in Section \ref{defandnot}, open (or closed) balls of radius $\eta/(m-i)$ are still definable.

Setting $\delta:=\eta/(m-i)$, now $S$ is the disjoint union of the three swiss cheeses
$$S\ =\ \Bga\setminus B_{\geq\delta}(\alpha)\ \cup\  B_{\geq\delta}(\alpha)\setminus B_{>\delta}(\alpha)\ \cup\ B_{>\delta}(\alpha),$$
the second being empty if $\delta\notin V$.  
On the first of these, as observed above, 
$v(f(x))=v(a_m(x-\alpha)^m)$, 
and on the last, $m(f,\alpha,B_{>\delta}(\alpha))<m$.  
It therefore remains only to consider 
$A:=B_{\geq\delta}(\alpha)\setminus B_{>\delta}(\alpha)$, 
i.e. where $D$ contains a maximal element $\delta$ and $v(x-\alpha)=\delta$.

Let $C$ be the set $\set{x\in A}{v(f(x))>v\left(a_m(x-\alpha)^m\right)+2^mv(m!)}$.  Now the condition on $v(f(x))$ of the proposition also holds on $A\setminus C$, so in fact it only remains to consider $v(f(x))$ on $C$.  Define an equivalence relation $\sim$ on $C$ by $x\sim y \Leftrightarrow v(x-y)>\delta \Leftrightarrow \rv(x-\alpha)=\rv(y-\alpha)$.

Proposition \ref{prop21} shows that each $\sim$-equivalence class in $C$ contains a root $\lambda$ of a derivative of $f$.  Thus, each such equivalence class is of the form $B_{>\delta}(\lambda)$, and in particular, there are finitely many of them.  So $A\setminus C$ is a swiss cheese, and we finally must only prove the claim for a ball $B=B_{>\delta}(\lambda)$.

By Proposition \ref{prop22}, $m(f,\lambda,B)\leq m$.
If in fact $m(f,\lambda,B) < m$, then the induction hypotheses takes effect, and we're done.  However, equality may occur.  In this case, however, note that $\alpha\notin B$ (since $f$ cannot have a collision at $\alpha$ around $\alpha$).  As $\alpha$ was chosen to be a root of a derivative of $f$ and $\alpha\in S$, $B$ contains strictly fewer roots of derivatives of $f$ than $S$.  Thus, in this case the secondary induction hypothesis applies to complete the proof.
\end{proof}

In residue characteristic $0$ the statement of Proposition \ref{prop23} simplifies considerably: since $v(n)=0$ for all integers $n$, on each $T_i$ we get in fact $v(f(x))=v\left(a_{im_i}(x-\alpha_i)^{m_i}\right)$.

Finally, we return to the leading term structures to find that the above decomposition also enables the analysis of $\rvd(f(x))$.  Thanks to Propositions \ref{prop3.5} and \ref{prop4}, this is an immediate consequence of the above proposition.

\begin{prop}
\label{prop25}
Let $f(x)\in K[x]$ be a polynomial of degree $d$ and $0\leq \delta\in V$.  Then there are 
\begin{enumerate}[(i)]
\item
disjoint swiss cheeses $U_1,\ldots,U_k$ partitioning $K=\bigcup\limits_{i=1}^kU_i$,
\item
elements $\alpha_1,\ldots,\alpha_k\in K$, 
\item
and positive integers $q_1,\ldots,q_k\leq (d!)^{2^d}$
\end{enumerate} 
such that for each $i$, if $f(x)=\sum\limits_{j=0}^da_{ij}(x-\alpha_i)^j$ then for all $x\in U_i$, 
$$\rvd\left(P(x)\right)=\rvd\left(\sum\limits_{j=0}^d\rv_{\delta+v(q_i)}(a_{ij})\rv_{\delta+v(q_i)}(x-\alpha_i)^j\right)$$ 
is well-defined.

The $\alpha_1,\ldots,\alpha_k$ can be chosen from among the roots of derivatives of $f$.
\qed
\end{prop}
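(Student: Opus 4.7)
The plan is to extract Proposition~\ref{prop25} as a direct consequence of Proposition~\ref{prop23}, translating the valuation-level bounds into the leading term structures through Propositions~\ref{prop3.5} and~\ref{prop4}.

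Concretely, apply Proposition~\ref{prop23} to $f$ with $S = K$ (which is itself a swiss cheese) to obtain a partition $K = U_1 \cup \cdots \cup U_k$ with $\alpha_i \in K$ among the roots of derivatives of $f$ and $m_i = m(f,\alpha_i,U_i) \leq d$ such that, for each $x \in U_i$,
$$v\!\left(a_{im_i}(x-\alpha_i)^{m_i}\right) \leq v(f(x)) \leq v\!\left(a_{im_i}(x-\alpha_i)^{m_i}\right) + 2^{m_i} v(m_i!).$$
Define $q_i := (m_i!)^{2^{m_i}}$, so that $v(q_i) = 2^{m_i} v(m_i!)$ and $q_i \leq (d!)^{2^d}$, and set $\gamma_i := \delta + v(q_i)$.

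The construction of the partition in Proposition~\ref{prop23} produces each $U_i$ in such a way that $a_{im_i}(x-\alpha_i)^{m_i}$ attains $\min_j v(a_{ij}(x-\alpha_i)^j)$ uniformly on $U_i$ (not merely at some witness as the bare definition of $m(f,\alpha_i,U_i)$ requires). Combined with the upper bound above, this forces the collision severity $\ep$ of $f$ at $x$ around $\alpha_i$ to satisfy $\ep \leq v(q_i) = \gamma_i - \delta$. When $\ep > 0$, Proposition~\ref{prop4} with $\gamma = \gamma_i$ yields that every $\bfz \in \RV_{\gamma_i}$ satisfying $\sum_j \rv_{\gamma_i}(a_{ij}) \rv_{\gamma_i}(x-\alpha_i)^j \approx \bfz$ projects via $\rv_{\gamma_i \to \delta}$ to $\rvd(f(x))$; hence the outer $\rvd$ in the displayed identity is independent of the choice of $\bfz$ and equals $\rvd(f(x))$. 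When $\ep = 0$ there is no collision and Proposition~\ref{prop3.5} gives the same equality, with the inner sum in fact well-defined already in $\RV_{\gamma_i}$.

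The only point requiring care is the claim that $v(a_{im_i}(x-\alpha_i)^{m_i})$ is minimal over $j$ throughout $U_i$, rather than only at one point as in the definition of $m(f,\alpha_i,U_i)$. This is implicit in the inductive case analysis that produces the partition: each recursive branch either peels off a ball on which a strict domination $v(a_j(x-\alpha)^j) < v(a_m(x-\alpha)^m)$ holds (reducing the relevant $m$), or restricts to a piece on which the current top degree attains the minimum. Once this observation is granted, the severity bound is automatic and the translation to leading terms via Propositions~\ref{prop3.5} and~\ref{prop4} is mechanical; no further ideas are needed.
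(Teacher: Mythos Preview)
Your argument is correct and is precisely the approach the paper takes: the proposition is stated with a \qed and only the sentence ``Thanks to Propositions~\ref{prop3.5} and~\ref{prop4}, this is an immediate consequence of the above proposition'' preceding it, which you have faithfully unpacked. Your care in flagging that $v(a_{im_i}(x-\alpha_i)^{m_i})$ is actually minimal on all of $U_i$ (not merely at a witness) is well placed---this is indeed implicit in the case analysis of the proof of Proposition~\ref{prop23} rather than in the bare definition of $m(f,\alpha_i,U_i)$---and with that observation the severity bound and the application of Propositions~\ref{prop3.5} and~\ref{prop4} go through exactly as you describe.
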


Though each of the preceding propositions is stated for a single polynomial $f(x)$, the same results will hold for any finite number of polynomials $f_1,\ldots,f_n$.  To obtain the desired decomposition, simply apply the proposition to each $f_i$ separately, and then intersect the resulting partitions to get one which works for all $f_i$ simultaneously.  We are again using the fact that the intersection of finitely many swiss cheeses is a swiss cheese.

\section{Quantifier elimination}
\label{qe}

The methods used in the decomposition of the previous section are reminiscent of those employed by Cohen \cite{coh1} in his decision procedure for the $p$-adics (as well as those of Cluckers and Loeser \cite{cl1} in the context of $b$-minimality).  In fact, these results and techniques can be used to give an effective quantifier elimination, and therefore a decision procedure, for the field relative to the leading term structures.

Unlike in $\bbQ_p$, there can be no quantifier elimination or decision procedure for general henselian valued fields, due to the lack of control over the residue field or value group in the general case.  One could propose that $\bbQ_p$ is decidable precisely because its residue field (a finite field) and value group (Presburger arithmetic) are.  

The objective, then, turns to relative results.  As noted in the Introduction, Kuhlmann \cite{kuh1} proved that in the leading term language, the theory of a henselian valued field of characteristic $0$ eliminates quantifiers over the field sort.  In this section, we give a new proof of Kuhlmann's theorem which yields not only the relative quantifier elimination, but an explicit procedure for eliminating field-sorted quantifiers.  

This implies a relative decision procedure in the sense that if the leading term structures are themselves decidable, then the valued field as a whole is decidable; or alternatively, if we allow ourselves access to an oracle for the leading term structures, then we can construct a decision procedure for the valued field.

Let us point out also that the quantifier elimination fails relative to the residue field and value group.  The leading term language is a necessity here.  To see this, consider the elements $x_1=t^2$ and $x_2=2t^2$ in the field $\bbQ((t))$.  Although $x_1$ is a square while $x_2$ is not, since both are transcendental over $\bbQ$ and both have identical residue and valuation, $x_1$ and $x_2$ satisfy precisely the same field-quantifier-free formulas in the standard three-sorted language.  

One could circumvent this by adding other additional structure such as a cross-section of the value group or an angular component map (see for example \cite{yin1}) on the field.  However such a language would be strictly stronger than the leading term language in that it could interpret the leading term structures, but also contains a definable subset isomorphic to the value group (namely, the value group sort itself).

The first step in the quantifier elimination comes from deciding questions about when certain finite sets of balls have a non-empty intersection.

\begin{prop}
\label{prop51}
Let $z_i, a_i\in K$, $0\leq \delta_i \in V$ for $i\leq n$.  
The formula
\begin{equation*}
\ex{x}{\bigwedge\limits_{i\leq n} \rv_{\delta_i}(z_i)=\rv_{\delta_i}(x-a_i)}
\end{equation*}
is equivalent to a formula with no field-sorted quantifiers over the parameters
$\rv_{\delta_i}(z_i)$, $\rv_{\delta_i}(a_i-a_j)$, and $\delta_i$ (or, more precisely, an element of value $\delta_i$).
\end{prop}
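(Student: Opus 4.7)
The formula asserts the existence of $x$ in the intersection of the open balls $B_i := B_{> v(z_i) + \delta_i}(a_i + z_i)$, since by Proposition~\ref{prop1} the condition $\rv_{\delta_i}(z_i) = \rv_{\delta_i}(x - a_i)$ is equivalent to $x \in B_i$. My plan is to reduce this intersection problem to pairwise intersection using the ultrametric inequality, and then translate each pairwise condition into the leading-term language.

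By the standard nesting property of ultrametric balls (two intersecting balls are nested), a finite family of balls in $K$ has common intersection iff every pair of them intersects, so the formula is equivalent to $\bigwedge_{i<j} (B_i \cap B_j \neq \emptyset)$. Two open balls $B_{>r}(c)$ and $B_{>r'}(c')$ meet iff $v(c - c') > \min(r, r')$, so each pairwise condition becomes
$$v\bigl((a_i - a_j) + (z_i - z_j)\bigr) > \min\bigl(v(z_i) + \delta_i,\ v(z_j) + \delta_j\bigr).$$
The threshold on the right is $\RV$-definable from the parameters, since $v(z_i)$ is recoverable from $\rv_{\delta_i}(z_i)$ via the value-group interpretation of Proposition~\ref{prop5}(1). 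For the left-hand side, I would work in $\RV_\gamma$ with $\gamma := \max(\delta_i, \delta_j)$: the parameter $\rv_\gamma(a_i - a_j)$ is directly in the list (up to sign), and whichever of $\rv_{\delta_i}(z_i)$, $\rv_{\delta_j}(z_j)$ is at scale $\gamma$ is given, while the other is lifted from its coarser scale by an $\RV$-sorted existential quantifier. The partial addition $\oplus_\gamma$ then encodes the sum $(a_i - a_j) + (z_i - z_j)$ in $\RV_\gamma$, whose valuation can be compared to the threshold using the value-group interpretation.

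The main technical obstacle is the behaviour of approximate addition under \emph{cancellation}: when $v((a_i - a_j) + (z_i - z_j))$ exceeds the minimum of the summand valuations, the $\oplus_\gamma$-relation admits multiple witnesses, not all of which give the $\rv$-class of the true sum in $K$. This is resolved using Proposition~\ref{prop4}, which guarantees that the projection of any $\oplus_\gamma$-witness to a sufficiently coarser scale is unique and agrees with the actual $\rv_\delta$-class. A case analysis, carried out with $\RV$-sorted quantifiers and the value-group ordering, then matches the available parameter precision to the severity of any cancellation. Degenerate cases such as $z_i = 0$, which forces $x = a_i$ and reduces the existence to the conditions $\rv_{\delta_j}(a_i - a_j) = \rv_{\delta_j}(z_j)$ for $j \neq i$, are handled by direct specialization.
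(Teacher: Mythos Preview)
Your proposal is correct and follows essentially the same approach as the paper: interpret the formula as a ball-intersection problem, reduce to pairwise intersection via ultrametric nesting, then express each pairwise condition $v((a_i-a_j)+z_i-z_j) > \min(v(z_i)+\delta_i,\, v(z_j)+\delta_j)$ as an $\RV$-formula through a case analysis on the relative sizes of the summand valuations and the $\delta_i$, using Proposition~\ref{prop4.5} to detect when cancellation pushes the valuation of the sum past the threshold. The paper carries out this case analysis explicitly in four cases (working in $\RV_{\delta_k}$ for the index $k$ with the smaller radius $v(z_k)+\delta_k$, rather than your $\RV_{\max(\delta_i,\delta_j)}$, but both choices work since the lifting error always exceeds the intersection threshold), and your deferred case analysis would amount to the same computation.
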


\begin{proof}
Notice that the set of $x$ satisfying $\rv_{\delta_i}(z_i)=\rv_{\delta_i}(x-a_i)$ is in fact equal to the open ball $B_i:=B_{>v(z_i)+\delta_i}(z_i+a_i)$.
So what is sought is a means of testing for nonemptiness of the intersection of the balls $B_i$.  Since finitely many balls having pairwise nonempty intersections implies a nonempty intersection, it will be sufficient to do so for the intersection of two balls.  Thus we may assume $n=2$.

Let us assume also that $v(z_1)+\delta_1 \leq v(z_2)+\delta_2$.  
This implies that $B_1 \cap  B_2 \neq \emptyset$ iff $B_1 \supseteq B_2$ iff $z_2+a_2 \in B_1$ iff $v(z_1+a_1-z_2-a_2)>v(z_1)+\delta_1$.

\begin{itemize}
\item
\emph{Case 1:} $v(z_1)\leq v(a_1-a_2)$, $v(z_1) \leq v(z_2)$, and $\delta_1 \leq \delta_2$.  

Then, by Proposition \ref{prop4.5}, $v(z_1+a_1-z_2-a_2)>v(z_1)+\delta_1$ is equivalent to 
\begin{equation*}
\ex{\mathbf{w}_1,\mathbf{w}_2\in \RV_{\delta_1}}
{
\begin{array}{c}
v(\mathbf{w}_1)\neq v(\mathbf{w}_2)\ \wedge \\   
\rv_{\delta_1}(z_1)-\rv_{\delta_1}(z_2)+\rv_{\delta_1}(a_1-a_2)\approx\bfw_1\ \wedge \\
\rv_{\delta_1}(z_1)-\rv_{\delta_1}(z_2)+\rv_{\delta_1}(a_1-a_2)\approx\bfw_2
\end{array}
}
\end{equation*}
since the sum in $\RV_{\delta_1}$ at least determines the valuation except when 
$$v(z_1-z_2+a_1-a_2)>\min\left\{v(z_1),v(z_2),v(a_1-a_2)\right\}+\delta_1=v(z_1)+\delta_1.$$

\item
\emph{Case 2:} $v(z_1)\leq v(a_1-a_2)$, $v(z_1) \leq v(z_2)$, and $\delta_1 > \delta_2$.  

This time, although $\rv_{\delta_1}(z_2)$ is no longer uniquely determined from 
$\rv_{\delta_2}(z_2)$, 
$v(z_1+a_1-z_2-a_2)>v(z_1)+\delta_1$ is equivalent to 
$$
\forall \mathbf{u}\in\RV_{\delta_1}\exists \mathbf{w}_1,\mathbf{w}_2\in \RV_{\delta_1} \\
\hspace{3 in}
$$
$$
\hspace{.4 in}
\left(
\rv_{\delta_2}(\mathbf{u})=\rv_{\delta_2}(z_2) \rightarrow
\left(
\begin{array}{c}
v(\mathbf{w}_1)\neq v(\mathbf{w}_2)\ \wedge \\   
\rv_{\delta_1}(z_1)-\mathbf{u}+\rv_{\delta_1}(a_1-a_2)\approx\bfw_1\ \wedge \\
\rv_{\delta_1}(z_1)-\mathbf{u}+\rv_{\delta_1}(a_1-a_2)\approx\bfw_2
\end{array}
\right)
\right)
$$
because $\rv_{\delta_2}(\mathbf{u})=\rv_{\delta_2}(z_2)$ implies that $\rv_{\delta_1}(z_1)-\mathbf{u}=\rv_{\delta_1}(z_1)-\rv_{\delta_1}(z_2)$.  

To see this, let $\mathbf{u}=\rv_{\delta_1}(u)$ and note that the inequality $v(z_1)<v(z_2)=v(u)$ must in fact be strict.  Thus $\rv_{\delta_1}(z_1)-\mathbf{u}=\rv_{\delta_1}(z_1-u)$ and $\rv_{\delta_1}(z_1)-\rv_{\delta_1}(z_2)=\rv_{\delta_1}(z_1-z_2)$ as well-defined sums.  
Now
$$v((z_1-u)-(z_1-z_2))=v(z_2-u)>v(z_2)+\delta_2\geq v(z_1)+\delta_1$$
by $\rv_{\delta_2}(u)=\rv_{\delta_2}(z_2)$.

Now argue as in Case 1.

\item
\emph{Case 3:} $v(z_1)\leq v(a_1-a_2)$ and $v(z_2) < v(z_1)$.

This implies $v(z_1+a_1-z_2-a_2)=v(z_2)<v(z_1)+\delta_1$, so this case is trivial.

\item
\emph{Case 4:} $v(a_1-a_2) < v(z_1)$.

In this case, $\rv_{\delta_1}(z_1)+\rv_{\delta_1}(a_1-a_2)$ is well-defined.
Then 
$$\ex{x}{\rv_{\delta_1}(z_1)=\rv_{\delta_1}(x-a_1)\wedge \rv_{\delta_2}(z_2)=\rv_{\delta_2}(x-a_2)}$$
holds if and only if 
$$\ex{x}{\rv_{\delta_1}(z_1)+\rv_{\delta_1}(a_1-a_2)=\rv_{\delta_1}(x-a_2)\wedge \rv_{\delta_2}(z_2)=\rv_{\delta_2}(x-a_2)}.$$
If $\delta_1 \leq \delta_2$ this is equivalent to
$$\rv_{\delta_1}(z_1)+\rv_{\delta_1}(a_1-a_2)=
\rv_{\delta_1}\left(\rv_{\delta_2}(z_2)\right)$$
(witnessed when the above holds by $x=z_2+a_2$),
while if $\delta_2 < \delta_1$ it is equivalent to
$$\rv_{\delta_2}\left( \rv_{\delta_1}(z_1)+\rv_{\delta_1}(a_1-a_2) \right)
=\rv_{\delta_2}(z_2)$$
(witnessed by $x=z_1+a_1$).
\end{itemize}

The desired formula will then be the disjunction over all these cases.
\end{proof}

In fact, we will need the above result to apply more generally to formulas involving the leading terms of polynomials linear in $x$.

\begin{prop}
\label{prop52}
Let $z_i, a_i, b_i\in K$ with $a_i\neq 0$.  
The formula
\begin{equation}
\label{eqn52}
\ex{x}{\bigwedge\limits_{i\leq n} \rv_{\delta_i}(z_i)=\rv_{\delta_i}(a_ix-b_i)}
\end{equation}
is equivalent to a formula with no field-sorted quantifiers over parameters
$\rv_{\delta_i}(z_i)$, $\rv_{\delta_i}(a_j)$, $\rv_{\delta_i}(a_ib_j-a_jb_i)$, and $\delta_i$.
\end{prop}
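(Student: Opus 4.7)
The plan is to reduce Proposition \ref{prop52} to Proposition \ref{prop51} by a definable change of variable, exploiting that each $a_i\neq 0$ so that $\rv_{\delta_i}(a_i)$ is a unit in the multiplicative group $\RV_{\delta_i}^\times$. Multiplication on $\RV_{\delta_i}$ is always well-defined (it is inherited directly from the group structure on $K^\times/(1+\maxm_{\delta_i})$), so the conjunct
$$\rv_{\delta_i}(z_i) = \rv_{\delta_i}(a_ix-b_i)$$
can, after multiplying both sides by $\rv_{\delta_i}(a_i)^{-1}$, be rewritten as
$$\rv_{\delta_i}(z_i/a_i) = \rv_{\delta_i}(x - b_i/a_i).$$
Setting $\tilde z_i := z_i/a_i$ and $c_i := b_i/a_i$, the formula (\ref{eqn52}) takes the form
$$\exists x\ \bigwedge_{i\le n}\rv_{\delta_i}(\tilde z_i) = \rv_{\delta_i}(x-c_i),$$
to which Proposition \ref{prop51} applies directly.

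Proposition \ref{prop51} then produces a field-quantifier-free equivalent over the parameters $\rv_{\delta_i}(\tilde z_i)$, $\rv_{\delta_i}(c_i - c_j)$, and $\delta_i$. The remaining task is to check that each of these is expressible from the parameter list advertised in Proposition \ref{prop52}. For the first,
$$\rv_{\delta_i}(\tilde z_i) = \rv_{\delta_i}(z_i)\cdot\rv_{\delta_i}(a_i)^{-1},$$
which lies in the multiplicative closure of $\rv_{\delta_i}(z_i)$ and $\rv_{\delta_i}(a_i)$ (the latter being the $j=i$ case of $\rv_{\delta_i}(a_j)$). For the second, the identity
$$c_i - c_j = \frac{a_jb_i - a_ib_j}{a_ia_j} = -\,\frac{a_ib_j-a_jb_i}{a_ia_j}$$
yields
$$\rv_{\delta_i}(c_i - c_j) = \rv_{\delta_i}(-1)\cdot\rv_{\delta_i}(a_ib_j-a_jb_i)\cdot\rv_{\delta_i}(a_i)^{-1}\cdot\rv_{\delta_i}(a_j)^{-1},$$
again a term built from the allowed parameters. (If $a_ib_j - a_jb_i = 0$, then $c_i = c_j$, and both sides equal $\infty$; the identity still makes sense under the convention for $\infty$.)

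There is no real obstacle here — the multiplicative manipulations are totally defined in $\RV_{\delta_i}$ and merely translate a linear equation into the affine form already handled. The only point worth emphasizing is that we are \emph{not} required to exhibit $\tilde z_i$ or $c_i$ as elements of the parameter set; only their leading terms at level $\delta_i$, and those we have just rewritten in terms of the parameters listed in the proposition. Substituting these expressions back into the field-quantifier-free formula supplied by Proposition \ref{prop51} gives the desired equivalent.
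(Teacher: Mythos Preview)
Your proof is correct and is exactly the paper's approach: the paper's proof is a single sentence stating that the result follows from Proposition~\ref{prop51} after factoring out $\rv_{\delta_i}(a_i)$, and you have simply spelled out that factoring and the bookkeeping on parameters in detail.
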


\begin{proof}
This is easily adapted from \ref{prop51} by applying the proposition after factoring out $\rv_{\delta_i}(a_i)$ in (\ref{eqn52}).
\end{proof}

Proposition \ref{prop52} forms the basis for an induction on the maximum degree of a polynomial appearing as a leading term.  The relative quantifier elimination essentially uses the linearization of the leading terms of polynomials to push questions about the existence of field elements into the leading term structures.  

One consequence of this approach is that we need not make any assumptions on the formula in the $\RV$ structures.  Indeed, we may allow any additional structure (such as a cross section, or an expansion to $\RV^{\text{eq}}$) on the leading terms.  The important point is that the field sort carries only the usual ring language and the map(s) $\rv_\delta$.

The basic situation, therefore, would be a two-sorted structure $(K, \RV)$ in residue characteristic $0$, and a many-sorted structure $(K, \RV_0, \RV_{v(p)}, \RV_{v(p^2)},\ldots)$ when $\chr(R)=p>0$.  In full generality, however, the language can include any expansion on the leading term sorts of these basic languages.

\begin{prop}
\label{prop53}
Let $T$ be the theory of a characteristic $0$ henselian field in a language of the kind described above.  Then $T$ eliminates field-sorted quantifiers.
\end{prop}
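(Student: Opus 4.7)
The plan is to eliminate a single field-sorted existential quantifier: given $\exists x\,\phi(x,\bar y)$ with $\phi$ quantifier-free, produce an equivalent formula with no field-sorted quantifiers. Any field-atom $P(x)=0$ is $\rv(P(x))=\infty$, and every other occurrence of $x$ in $\phi$ is already inside some $\rv_\delta$, so we may assume $\phi$ is a boolean combination of atoms of the form $\psi_j(\rv_{\delta_j}(P_j(x)),\bar y)$ with $P_j\in K[\bar y][x]$ and $\psi_j$ purely in the $\RV$-sorts.

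The heart of the argument is to apply Proposition \ref{prop25} simultaneously to $\{P_1,\dots,P_n\}$ with $\delta=\max_j\delta_j$, yielding a partition $K=U_1\sqcup\cdots\sqcup U_k$ into swiss cheeses with centers $\alpha_i$ and scaling integers $q_i$ such that on $U_i$ each $\rv_{\delta_j}(P_j(x))$ is an explicit $\RV$-polynomial in $\rv_{\gamma_i}(x-\alpha_i)$ (with $\gamma_i=\delta+v(q_i)$) and the $\rv_{\gamma_i}$ of the coefficients of the expansion of $P_j$ around $\alpha_i$. Substituting, $\phi|_{U_i}$ becomes an $\RV$-formula $\Phi_i(\bfu_i,\bar y)$ in the single $\RV$-variable $\bfu_i:=\rv_{\gamma_i}(x-\alpha_i)$. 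Writing the swiss cheese as $U_i=B_i\setminus(C_{i,1}\cup\cdots\cup C_{i,m_i})$, membership in $U_i$ is a positive $\rv$-condition on some $x-\beta_i$ conjoined with negated $\rv$-conditions on $x-\beta_{i,l}$, so $\exists x\,\phi$ becomes a finite disjunction of statements of the form
\begin{equation*}
\exists x\,\Big[\bigwedge_k\rv_{\eta_k}(z_k)=\rv_{\eta_k}(x-\beta_k)\wedge\bigwedge_l\rv_{\eta'_l}(z'_l)\ne\rv_{\eta'_l}(x-\beta'_l)\wedge\Phi_i(\rv_{\gamma_i}(x-\alpha_i),\bar y)\Big].
\end{equation*}

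It remains to eliminate $x$. The positive $\rv$-conditions intersect to an open ball (or $\emptyset$); by Proposition \ref{prop52}, both the non-emptiness of this intersection and the resulting constraint on $\rv_{\gamma_i}(x-\alpha_i)$ (as an $\RV$-definable subset of $\RV_{\gamma_i}$) are expressible without field-sorted quantifiers. Conjoining with $\Phi_i(\bfu_i,\bar y)$ restricts $\bfu_i$ further, so the positive part together with $\Phi_i$ amounts to an $\RV$-definable set $E_i(\bar y)\subseteq\RV_{\gamma_i}$ of allowable values of $\rv_{\gamma_i}(x-\alpha_i)$; each negated condition $\rv_{\eta'_l}(z'_l)\ne\rv_{\eta'_l}(x-\beta'_l)$ excludes one ball from the witness set, which at the $\RV$-level removes an $\RV$-definable subset from $E_i$. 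Non-emptiness of what remains is then an $\RV$-formula. The principal technical obstacle is this last step: ensuring that the interplay between the negated ball conditions and the $\Phi_i$-constraint on $\bfu_i$ is genuinely captured by $\RV$-conditions on elements of $\RV_{\gamma_i}$ — but this is precisely what Proposition \ref{prop52} provides via the bijective correspondence between balls around $\alpha_i$ of radius $\gamma_i$ and elements of $\RV_{\gamma_i}$, and iterating the positive linear-case elimination over the boolean combinations arising from the swiss-cheese ``holes'' completes the quantifier elimination.
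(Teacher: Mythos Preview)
There is a genuine gap. The centers $\alpha_i$ produced by Proposition~\ref{prop25} are roots of derivatives of the $P_j$; since the coefficients of the $P_j$ lie in the free field-variables $\bar y$, the $\alpha_i$ are only \emph{algebraic} over $\bar y$, not terms in $\bar y$. Consequently your ``$\RV$-formula'' $\Phi_i(\mathbf{u}_i,\bar y)$ secretly depends on $\alpha_i$ --- both through $\mathbf{u}_i=\rv_{\gamma_i}(x-\alpha_i)$ and through the leading terms of the re-centered coefficients $a_{ijk}$ --- and the partition $\{U_i\}$ itself (its shape and even its cardinality $k$) varies with $\bar y$. To write the result as a single first-order formula in $\bar y$ you must existentially quantify over each $\alpha_i$ subject to a polynomial condition $g_i(\alpha_i,\bar y)=0$ of degree up to $m$. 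You have therefore re-introduced field-sorted quantifiers of the same complexity as the one you started with, and the procedure does not terminate.

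This is precisely why the paper organizes the proof as an induction on $m=\max_j\deg_x P_j$ and is far from done after one application of Proposition~\ref{prop25}. After introducing the auxiliary root-variables, Euclidean division and a second decomposition push all but one constraint strictly below degree $m$, leaving a residual clause $\exists x\,\bigl(f(x)=0\wedge\varphi(\rv_\delta(x-z))\bigr)$ with $\deg f=m$. That last quantifier cannot be handled by the linear case of Proposition~\ref{prop52}; the paper eliminates it (Step~4) by replacing ``$f$ has a root in the given $\rv_\delta$-class'' with the collision criterion of Proposition~\ref{prop21}, expressed as simultaneous severe collisions around the roots of $f',\ldots,f^{(d)}$, all of which live in degree $<m$ and so fall to the induction hypothesis. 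Your sketch omits both the induction and this collision argument, and misidentifies the swiss-cheese holes as the principal obstacle when in fact they are not.
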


\begin{proof}
We break the proof up into several steps, each of which further reduces the class of formulas needing to be considered.  To mitigate a logjam of indices, the notation is reset at each step, so that $f(x)$ in Step 2 is not necessarily the same as $f(x)$ in Step 1, but only the syntax of the formula under consideration is maintained.

We begin with an existential formula of the form
\begin{equation}
\label{eqn33}
\exists x\in K\big(\varphi\left(\rv_{\delta_1}(f_1(x,\bar{u})),\ldots,\rv_{\delta_n}(f_n(x,\bar{u}))\right)\big)
\end{equation}
where $\varphi$ is some predicate definable (with $\RV$-sorted parameters, possibly in an expanded language) in 
$\RV_{\delta_1}\times\ldots\times\RV_{\delta_n}$, the $f_i$ are polynomials over $K$,
and all field-sorted free variables are among $\bar{u}$ (which we henceforth suppress from the notation).
It suffices to show that this is equivalent to a field-quantifier-free formula.

We proceed by induction on $m:=\max_{i\leq n}\left\{\deg(f_i(x))\right\}$.  If $m=0$, the result is trivial.

If $m=1$, i.e. each $f_i$ is linear in $x$, rewrite (\ref{eqn33}) as
\begin{equation}
\label{eqn335}
\ex{\mathbf{z_i}\in\RV_{\delta_i}}
{
\varphi\left(\mathbf{z_1},\ldots,\mathbf{z_n}\right)\ \wedge\ 
\ex{x\in K}{\bigwedge\limits_{i\leq n} \mathbf{z_i}=\rv_{\delta_i}(f_i(x))}
}.
\end{equation}
Now Proposition \ref{prop52} applies to eliminate the quantifier $\exists x$.

{\bf Step 1:} \emph{From (\ref{eqn33}) to formulas of the form
\begin{equation}
\label{eqn531}
\exists x\in K\big(f(x)=0 \wedge \varphi\left(\rv_{\delta_1}(g_1(x)),\ldots,\rv_{\delta_n}(g_n(x))\right)\big)
\end{equation}
with $\deg(f(x))\leq m$.
}

Proposition \ref{prop25} gives a partition 
$K=\bigcup\limits_{j=1}^mS_j$ and for each $i\leq n, j\leq m$:
\begin{itemize}
\item
a root $\alpha_{ij}$ 
of some derivative $g_{ij}(x)$
of $f_i$ (including possibly $f_i$ itself), 
\item
and positive integers $q_{ij}\leq 2^mv(m!)$ 
\end{itemize}
such that for all $x\in S_j$ and $i\leq n$,
$\rv_{\delta_i}(f_i(x))$ can be computed as the well-defined image in $\RV_{\delta_i}$ of a polynomial function of $\rv_{\delta_i+v(q_{ij})}(x-\alpha_{ij})$.

The roots $\alpha_{1j},\ldots,\alpha_{nj}$ also serve as centers of the balls comprising the swiss cheeses $S_j$.

In this way, the formula in (\ref{eqn33}) is equivalent to one of the form
\begin{equation}
\label{eqn34}
\begin{array}{c}
\exists y_{11},\ldots,y_{nm}\in K
\ 
\Bigg(\bigwedge\limits_{i,j} g_{ij}(y_{ij})=0\ \wedge \\
\ex{x\in K}{\bigvee\limits_j
\varphi_j\left(\rv_{\delta_1+v(q_{1j})}(x-y_{1j}),\ldots,\rv_{\delta_n+v(q_{nj})}(x-y_{nj})\right)}\Bigg).
\end{array}
\end{equation}

Specifically, $\varphi_j$ will express that
$x\in S_j$, that
$$\bfu_{ij}:=\sum\limits_{k=0}^{d_i}\rv_{\delta_i+v(q_{ij})}(a_{ijk})\rv_{\delta_i+v(q_{ij})}(x-y_{ij})^k$$
(given $f_i=\sum\limits_{k=0}^{d_i}a_{ijk}(x-y_{ij})^k$, so $a_{ijk}$ is a function of $y_{ij}$) is well-defined for each $i$, and that $\varphi$ holds with 
$\bfu_{ij}$
substituted for each $f_i$.

In (\ref{eqn34}) the bound variable $x$ occurs only linearly, so it can be eliminated as shown above.  This produces an equivalent formula in the form
\begin{equation}
\label{eqn344}
\ex{\left(y_{ij}\right)_{i,j}\in K}{\left(\bigwedge\limits_{i,j} g_{ij}(y_{ij})=0\right)
\wedge 
\psi\left(
\rv_{\gamma_1}(h_1(\bar{y})),\ldots,\rv_{\gamma_\ell}(h_\ell(\bar{y}))
\right)
}
\end{equation}
with $h_1,\ldots,h_\ell$ being polynomials and $\psi$ an $\RV$ formula.
So it remains to show that the quantifiers $\exists y_{11},\ldots,y_{mn}$ can be eliminated in such a formula.

In fact we may do so one quantifier at a time, so it will suffice to consider a formula of the form
\begin{equation}
\label{eqn35}
\exists y\in K\big(g(y)=0 \wedge \psi\left(\rv_{\gamma_1}(h_1(y)),\ldots,\rv_{\gamma_\ell}(h_\ell(y))\right)\big)
\end{equation}
with $\deg(g(y))\leq m$.  This completes Step 1.

{\bf Step 2:} \emph{From (\ref{eqn531}) to formulas of the form
\begin{equation}
\label{eqn532}
\exists x\in K\big(f(x)=0 \wedge \varphi\left(\rv_{\delta_1}(x-z_1),\ldots,\rv_{\delta_n}(x-z_n)\right)\big)
\end{equation}
with $\deg(f(x))\leq m$, and the $z_i$ free variables.
}

First of all, in (\ref{eqn531}),
$$\exists x\in K\big(f(x)=0 \wedge \varphi\left(\rv_{\delta_1}(g_1(x)),\ldots,\rv_{\delta_n}(g_n(x))\right)\big),$$
each $g_i(x)$ can be replaced with its remainder on division by $f(x)$
(in applying the euclidean algorithm, it will be necessary to multiply through by powers of the leading coefficient of $f$).  
Thus it may be assumed that 
$$\deg(g_i(x))<\deg(f(x))\leq m$$
for each $i\leq m$.  If the latter inequality were strict, of course, the induction hypothesis would finish the proof.

Otherwise, if we have equality, let us apply the decomposition of Proposition \ref{prop25} a second time relative to $g_1(x),\ldots,g_n(x)$.  The result is another formula equivalent to (\ref{eqn531}) taking the form
\begin{equation}
\label{eqn36}
\ex{\left(z_i\right)_{i\leq k}\in K}{\bigwedge\limits_{i\leq k} h_i(z_i)=0 \wedge
\exists x\big(f(x)=0 \wedge \psi\left(
\rv_{\gamma_1}(x-z_1),\ldots,\rv_{\gamma_k}(x-z_k)
\right)
\big)
}
\end{equation}
with 
$\deg(h_i(z_i))\leq \max\limits_{j\leq n}\left\{\deg(g_j(x))\right\} < m$ for every $i$.  Now, it will suffice to eliminate the quantifier $\exists x$ from the subformula
\begin{equation}
\label{eqn37}
\exists x\big(f(x)=0 \wedge \psi\left(
\rv_{\gamma_1}(x-z_1),\ldots,\rv_{\gamma_k}(x-z_k)
\right)
\big)
\end{equation}
since then we would be in the situation of (\ref{eqn344}) except now with the degrees of the $h_i(z_i)$ \emph{strictly} less than $m$.

This completes Step 2.

{\bf Step 3:} \emph{From (\ref{eqn532}) to formulas of the form
$$
\ex{x\in K}{f(x)=0 \wedge \varphi\left(\rv_{\delta}(x-z)\right)}
$$
with $z$ a free variable, $\deg(f(x))=d\leq m$, and $f(x)$ coprime to $f^{(i)}(x)$ for $1\leq i\leq d$.
}

In
$$\exists x\in K\big(f(x)=0 \wedge \varphi\left(\rv_{\delta_1}(x-z_1),\ldots,\rv_{\delta_n}(x-z_n)\right)\big),$$
suppose that we had $v(x-z_i)\geq v(x-z_k)$ and $\delta_j\geq\delta_k$ for all $k\leq n$.  Then for each $k$, $\rv_{\delta_k}(x-z_k)=\rv_{\delta_j\rightarrow\delta_k}\left(\rv_{\delta_j}(x-z_i)+\rv_{\delta_j}(z_i-z_k)\right)$ is well-defined.  (Since 
$v(z_i-z_k)\geq v(x-z_k)$, $$v(x-z_k)\geq \min\left\{v(x-z_i),v(z_i-z_k)\right\}\geq v(x-z_k)$$ implies equality and hence well-definition of $\rv_{\delta_j}(x-z_k)=\rv_{\delta_j}(x-z_i)+\rv_{\delta_j}(z_i-z_k)$.)

Thus $\varphi\left(
\rv_{\delta_1}(x-z_1),\ldots,\rv_{\delta_n}(x-z_n)\right)$ depends only on $\rv_{\delta_j}(x-z_i)$ and the parameters $\rv_{\delta_j}(z_i-z_k)$, and in this case we may write (\ref{eqn532}) as
\begin{equation}
\label{eqn38}
\ex{x}{f(x)=0 \wedge \psi\left(\rv_{\delta_j}(x-z_i)\right)}.
\end{equation}
If we can eliminate the $\exists x$ in this formula, then by taking the disjunction over the possible cases of which $v(x-z_i)$ is largest, we will be done.

Regarding the coprimality condition, if for some $i\geq 1$ we have $\gcd(f(x),f^{(i)}(x))=g(x)$ and $f(x)=g(x)h(x)$, (\ref{eqn38}) is equivalent to
$$\ex{x}{g(x)=0 \wedge \psi\left(\rv_{\delta_j}(x-z_i)\right)}\vee
\ex{x}{h(x)=0 \wedge \psi\left(\rv_{\delta_j}(x-z_i)\right)},$$
and $\deg(g(x)),\deg(h(x))<m$.

Therefore we may also assume $\gcd(f(x),f^{(i)}(x))=1$ for all $1\leq i\leq d$, finishing Step 3.

{\bf Step 4:} \emph{
Eliminating the quantifier $\exists x$ from the formula
\begin{equation}
\label{eqn533}
\exists x\in K\big(f(x)=0 \wedge \varphi\left(\rv_{\delta}(x-z)\right)\big)
\end{equation}
when $\deg(f(x))=d\leq m$ and $\gcd(f(x),f^{(i)}(x))$ for $1\leq i\leq d$.
}

Suppose $f(x)=\sum\limits_{i=0}^da_i(x-z)^i$.  If $a_0=0$, then $z$ is a root of $f$ and we may check whether $\varphi$ holds on $\rv_\delta(z-z)=\infty$.

Let $\gamma=2^d(v(d!)+\delta)$ and $\chi(\bfy)$ be the formula
$$\ex{\bfu_1,\bfu_2}{\sum\limits_{i=0}^d\rvg(a_i)\bfy^i\approx\bfu_1\wedge
\sum\limits_{i=0}^d\rvg(a_i)\bfy^i\approx\bfu_2 \wedge v(\bfu_1)\neq v(\bfu_2)}.$$
So $f$ has a collision of severity $>\gamma$ at $x$ around $z$ if and only if $\chi(\rvg(x-z))$ (by Proposition \ref{prop4.5}).

Let also $\varrho(y_1,\ldots,y_n)$ be a field-quantifier-free formula equivalent to
$$\left(\bigwedge\limits_{i=1}^n\bigvee\limits_{j=1}^d f^{(j)}(y_i)=0\right)\wedge \left(\bigwedge\limits_{j=1}^d\left(\neg\ex{y\in K}{f^{(j)}(y)=0\wedge \bigwedge_i y\neq y_i}\right)\right)$$
stating that $y_1,\ldots,y_n$ are all the roots of the proper derivatives $f',f'',\ldots,f^{(d)}$.  Such a formula must exist by the induction hypothesis.  We can take $n$ to be as large as necessary, no more than $\frac{d(d-1)}{2}$.

Now, consider the formula
\begin{equation}
\label{eqn39}
\begin{array}{c}
\exists y_1,\ldots,y_n\exists x \Big(\varrho(y_1,\ldots,y_n)\wedge\big(\bigwedge_{i=1}^n\chi(\rvg(x-y_i)\big)
\wedge\varphi(\rvd(x-z))\Big).
 \end{array}
\end{equation}

We claim that (\ref{eqn39}) is equivalent to (\ref{eqn533}).  In fact, if $f$ has a root at $x$ and $\lambda$ is a root of $f^{(i)}$ ($1\leq i<d$), then $f(\lambda)\neq 0$ by coprimality,
and so the constant term of $f$ recentered around $\lambda$ is nonzero.  This implies that $f$ still has a collision at $x$ around $\lambda$ (of infinite severity).

Suppose conversely that (\ref{eqn39}) holds.  Proposition \ref{prop22} (and the proof of Proposition \ref{prop23}) implies that there is a $\lambda$, which is a root of one of $f,f',f'',\ldots,f^{(d)}$, for which $\rvd(x-z)=\rvd(\lambda-z)$ and $f$ does not have a collision at $x$ around $\lambda$.  Since $\chi$ holds on each $\rvg(x-y_i)$, therefore, this $\lambda$ must be a root of $f$ itself.  In other words, (\ref{eqn39}) implies that there is a root $\lambda$ of $f$ for which $\rvd(x-z)=\rvd(\lambda-z)$, and $\varphi$ holds for this leading term $\rvd(\lambda-z)$.  This shows that (\ref{eqn533}) and (\ref{eqn39}) are equivalent.

In (\ref{eqn39}), the quantifier $\exists x$ can be eliminated as in (\ref{eqn335}), since $x$ appears only linearly.  Likewise, each quantifier $\exists y_i$ can also be eliminated by the induction hypothesis, because $\deg(f^{(i)})<\deg(f)$.

Taking the disjunction over all these cases, we have succeeded in eliminating the field-sorted quantifier in (\ref{eqn533}), and this finishes the proof.
\end{proof}

Since, looking back over the proof of Proposition \ref{prop23}, we have an effective algorithm for producing the swiss cheese decomposition, the above proof gives an effective algorithm for producing a field-quantifier-free formula from any formula in the leading term language.  Assuming formulas in the leading term sorts are decidable, therefore, we may use this to devise a decision procedure for formulas over the valued field, and we have proved

\begin{prop}
The theory of a henselian valued field with $\chr(K)=0$ is decidable relative to an oracle for the leading term structures $\langle\RV_{v(n)}\rangle_{n\in \bbN}$, or equivalently, as long as these structures are decidable.
\qed
\end{prop}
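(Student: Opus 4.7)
The plan is to observe that the quantifier-elimination procedure in Proposition \ref{prop53} is genuinely constructive, so given a sentence $\sigma$ in the leading term language, we can algorithmically produce an equivalent sentence $\sigma'$ involving only $\RV$-sorted quantifiers, and then query the oracle.

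First I would check that each ingredient of the proof of Proposition \ref{prop53} is effective. The swiss-cheese decomposition of Proposition \ref{prop23} is produced by an explicit double induction on $m(f,\alpha,S)$ and the number of roots of derivatives of $f$ in $S$; the relevant data (the polynomials $a_i$, the values $\eta = v(a_i)-v(a_m)$, the resulting radii $\eta/(m-i)$) are all built from the coefficients of $f$ by ring operations and applications of $v$, and the bounds $q_i\leq(d!)^{2^d}$ of Proposition \ref{prop25} are explicit. Hence Proposition \ref{prop25} yields a computable description of a partition of $K$ on each piece of which $\rv_\delta(f(x))$ becomes a polynomial expression in the leading term structures. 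Next, the ball-intersection test of Proposition \ref{prop51} is a finite case analysis in terms of data of the form $\rv_\delta(z)$ and $\delta$, so it is also effective, as is the linearization of Proposition \ref{prop52}. Steps 1--4 in the proof of Proposition \ref{prop53} then combine these by euclidean division (computable, after clearing leading coefficients), iteration, and the strong induction hypothesis on the maximal polynomial degree $m$ in the formula, which drives the recursion.

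Given this, the algorithm is as follows. For a sentence $\sigma$ in the leading term language, put each field-sorted quantifier in innermost prenex position within its block, and recursively apply the construction from Proposition \ref{prop53} to replace $\exists x\,\varphi(\rv_{\delta_1}(f_1(x,\bar u)),\ldots,\rv_{\delta_n}(f_n(x,\bar u)))$ by an equivalent formula with no field-sorted quantifier; because the recursion strictly decreases $\max_i\deg(f_i)$ (modulo invocations at smaller degree to eliminate quantifiers over roots of proper derivatives via $\varrho$), the process terminates. The output $\sigma'$ is a boolean combination of atomic formulas in the sorts $\RV_{\delta}$, where each $\delta$ appearing is either $0$ or of the form $v(n)$ for some integer $n$ produced by the bounds in Proposition \ref{prop25}. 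Feeding $\sigma'$ to the oracle for $\langle\RV_{v(n)}\rangle_{n\in\bbN}$ decides its truth, hence $\sigma$'s.

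The main obstacle, and the part that requires care rather than depth, is bookkeeping the effectiveness uniformly across the four steps of Proposition \ref{prop53}: in particular, Step 1 passes to existentials over roots of derivatives of $f$, and Step 4 introduces auxiliary variables $y_1,\ldots,y_n$ bounded by the formula $\varrho$ defining the root sets of $f',\ldots,f^{(d)}$; one must verify that $\varrho$ itself is produced by a recursive call at strictly smaller degree, so that the recursion is well-founded on the lex order (degree, then number of distinct $\RV$-sort quantifier blocks). Once this well-foundedness is explicit, the equivalence $\sigma\equiv\sigma'$ is given by Proposition \ref{prop53} and decidability reduces to the oracle as claimed.
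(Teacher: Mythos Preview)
Your proposal is correct and follows essentially the same approach as the paper: the paper's own argument is simply the one-paragraph observation that the proof of Proposition~\ref{prop53} (together with the explicit construction in Proposition~\ref{prop23}) gives an effective algorithm for eliminating field-sorted quantifiers, after which the oracle decides the remaining $\RV$-sentence. Your write-up is in fact more thorough than the paper's in checking effectiveness step by step; the only minor slip is the phrase ``boolean combination of atomic formulas in the sorts $\RV_\delta$,'' since $\sigma'$ may still carry $\RV$-sorted quantifiers (as you correctly said earlier), but this does not affect the argument.
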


\section{Definable subsets of $K$}
\label{subsets}

In this section, the goal is to use the quantifier elimination and decomposition to give a characterization of definable subsets of $K$.  This provides the promised analogue of the theorem of Holly \cite{hol1} on canonical forms for sets definable (in one variable) in algebraically closed valued fields.

\begin{prop}
\label{prop41}
Suppose $S\subseteq K$ is definable over $A$.  Then there are
$\alpha_1,\ldots,\alpha_k\in \acl(A)$ and a subset 
$D\subseteq \RV_{\delta_1}\times\ldots\times\RV_{\delta_k}$ definable over $\acl(A)$
such that
$$S=\set{x\in K}{\langle \rv_{\delta_1}(x-\alpha_1),\ldots,\rv_{\delta_k}(x-\alpha_k)\rangle\in D}.$$

As before, if $\chr(R)=0$, we may take $\delta_i=0$ for all $i$; if $\chr(R)=p>0$, then the 
$\delta_i$ can be taken among $v(p^n)$ for $n\in\mathbb{N}$.
\end{prop}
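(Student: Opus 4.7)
The plan is to combine the relative quantifier elimination of Proposition~\ref{prop53} with the decomposition of Proposition~\ref{prop25} via a direct translation. First I would apply Proposition~\ref{prop53} to put the defining formula of $S$ into a form $\varphi(x)$ without field-sorted quantifiers. After Boolean manipulation, $\varphi(x)$ is a Boolean combination of atomic formulas of two kinds: polynomial equations $p(x)=0$ with $p\in A[x]$, and $\RV$-sort predicates evaluated at terms $\rv_\delta(p(x))$ for various $p\in A[x]$ and admissible $\delta$. The finitely many roots of the polynomials occurring in equations all lie in $\acl(A)$, and membership among them is the condition $\rv(x-\alpha)=\infty$, so equations of this kind contribute centers $\alpha$ to our final list without further work.

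Next I would apply Proposition~\ref{prop25} simultaneously to the finite collection of polynomials occurring inside the $\rv_\delta$ terms of $\varphi$. This provides an $\acl(A)$-definable partition $K=U_1\cup\cdots\cup U_k$ into swiss cheeses, together with centers $\alpha_i\in\acl(A)$ (chosen from among roots of derivatives of the polynomials in play, hence algebraic over $A$) and integers $q_i$, such that on each $U_i$ every term $\rv_\delta(p(x))$ occurring in $\varphi$ is a well-defined $\RV$-polynomial function of $\rv_{\delta+v(q_i)}(x-\alpha_i)$. In residue characteristic $0$ one has $v(q_i)=0$, so the orders $\delta_i=0$ suffice; in the mixed-characteristic case the $q_i$ are products of factorials, so the orders land among the $v(p^n)$ as claimed.

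Finally, membership in each $U_i$ is itself captured by Boolean conditions on leading terms $\rv_\gamma(x-\beta)$, where $\beta$ ranges over the centers of the balls forming $U_i$ (which, being roots of derivatives appearing in the proof of Proposition~\ref{prop23}, lie in $\acl(A)$) and $\gamma$ is taken large enough. Collecting all these centers and orders into a single finite list $\alpha_1,\ldots,\alpha_k$ and $\delta_1,\ldots,\delta_k$, both the condition $x\in U_i$ and the restriction of $\varphi$ to $U_i$ become conditions on the tuple $(\rv_{\delta_j}(x-\alpha_j))_{j\leq k}$; letting $D\subseteq\RV_{\delta_1}\times\cdots\times\RV_{\delta_k}$ be the $\acl(A)$-definable set of tuples so obtained yields the asserted presentation. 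The only substantive mathematical input is Proposition~\ref{prop25}, already proved; what remains is bookkeeping, and the main point requiring care is keeping every center used in $\acl(A)$, which is ensured by the freedom in Propositions~\ref{prop23} and~\ref{prop25} to take centers from among roots of derivatives of the polynomials already present in $\varphi$.
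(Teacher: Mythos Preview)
Your proposal is correct and follows essentially the same route as the paper: apply Proposition~\ref{prop53} to reach a field-quantifier-free formula in the $\rv_\delta(f_i(x))$, invoke Proposition~\ref{prop25} to linearize each $\rv_\delta(f_i(x))$ on a swiss-cheese partition with centers in $\acl(A)$, and absorb the swiss-cheese membership conditions into further linear leading-term data. Your separate treatment of polynomial equations $p(x)=0$ is harmless but unnecessary, since $p(x)=0$ is already the $\RV$-condition $\rv(p(x))=\infty$ and so is subsumed by the general form~(\ref{eqn41}).
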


\begin{proof}
The elimination of field-sorted quantifiers from Proposition \ref{prop53} implies that $S$ is definable by a formula of the form
\begin{equation}
\label{eqn41}
\varphi\left(\rv_{\delta_1}(f_1(x)),\ldots,\rv_{\delta_k}(f_k(x))\right)
\end{equation}
with $\varphi$ being a formula over the leading term sorts and each $f_i$ a polynomial with coefficients over $A$.

Applying the decomposition of Proposition \ref{prop25}, there are swiss cheeses $U_1,\ldots,U_m$ partitioning $K$, for each $i\leq k$ $\RV$-polynomials $t_{i1},\ldots,t_{im}$ (over $\acl(A)$), and for each $i\leq k$ and
$j\leq m$ field elements $\alpha_{ij}\in\acl(A)$ such that (\ref{eqn41}) is equivalent to
$$
\bigvee\limits_{j=1}^m
\left(
x\in U_j\ \wedge 
\varphi\left(
t_{1j}[\rv_{\delta_{1j}}(x-\alpha_{1j})],\ldots,
t_{kj}[\rv_{\delta_{kj}}(x-\alpha_{kj})]
\right)
\right)
$$
(with each $\delta_{ij}=\delta_i+v(p^n)$, some integer $n$).  For each $i\leq k$ define $\gamma_i:=\max_{j\leq m}\{\delta_{ij}\}$.  Since every $t_{ij}[\rv_{\delta_{ij}}(x-\alpha_{ij})]$ can be computed as $t_{ij}[\rv_{\gamma_{i\rightarrow\delta_{ij}}}(\rv_{\gamma_i}(x-\alpha_{ij}))]$, it may without loss of generality be assumed that $\delta_{ij}=\gamma_i$ for all $i,j$.

The condition $x\in U_j$ is definable in $\RV$ with parameters of the form $\rv(x-\beta)$.  Without loss of generality we take $\beta$ to be among the $\alpha_{ij}$ (so that $x\in U_j$ is an $\RV$-definable condition on $\rv_{\gamma_i}(x-\alpha_{ij})$, some $i,j$), and let 
$\psi_j$
be the formula over the leading term sorts expressing 
$$\psi_j(\mathbf{x}_1,\ldots,\mathbf{x}_k)\Longleftrightarrow 
x\in U_j\ \wedge 
\varphi\left(
t_{1j}[\mathbf{x}_1],\ldots,
t_{kj}[\mathbf{x}_k]
\right).$$
Thus each $\psi_j$ is a formula over $\RV_{\gamma_1}\times\ldots\times\RV_{\gamma_k}$.

Finally, letting $\chi$ be the formula $\bigvee\psi_j$ and $D$ be the set in 
$\RV_{\gamma_1}\times\ldots\times\RV_{\gamma_k}$ defined by $\chi$,
we have
$$S=
\set{x\in K}{\langle \rv_{\gamma_1}(x-\alpha_1),\ldots,\rv_{\gamma_k}(x-\alpha_k)\rangle\in D}$$
as required.
\end{proof}

Holly's swiss cheeses in algebraically closed valued fields arise as boolean combinations of a finite number of balls.  This can be seen as the combination of a pullback of a finite set (from the residue field) and an interval (the value group).  It is a consequence of strong minimality and o-minimality that these are all the sets definable in residue field and value group.  As pointed out in the Introduction, it is unavoidable in the general henselian setting that we must allow for pullbacks of arbitrary definable sets $D$ of the leading term structures, which could be very complicated.

The pullback of an interval in the value group itself will produce a ball (or, more accurately, an annulus) around $0$.  Shifting to balls centered elsewhere in the algebraically closed case can be taken as analogous to our linear shifting by $\langle\alpha_1,\ldots,\alpha_k\rangle$.

To obtain a one-dimensional elimination of imaginaries in \cite{hol2} (`$1$-prototypes'), Holly introduces a new sort for the balls.  It follows by the same reasoning that henselian valued fields of characteristic $0$ admit $1$-prototypes in the leading term language after adding new sorts for definable sets of the form 
$$\set{x\in K}{\langle \rv_{\delta_1}(x-\alpha_1),\ldots,\rv_{\delta_k}(x-\alpha_k)\rangle\in D}.$$

In more dimensions, it is an immediate consequence of quantifier elimination that definable subsets of $K^n$ take the form
\begin{equation}
\label{eqn44}
\set{\langle x_1,\ldots,x_n\rangle\in K}{\langle \rv_{\delta_1}(f_1(\bar{x})),\ldots,\rv_{\delta_k}(f_k(\bar{x}))\rangle\in E}
\end{equation}
where $E$ is definable in $\RV_{\delta_1}\times\ldots\times\RV_{\delta_k}$ and each $f_i\in K[x_1,\ldots,x_n]$.

One could then obtain an essentially trivial elimination of imaginaries by including new sorts consisting of the sets (\ref{eqn44}).  An approach towards a more satisfying solution of the elimination of imaginaries problem may be to give a necessary and sufficient subclass of the polynomials $f_i$.  

For example, one could hope to show that every definable set can be coded in terms of sets of the form (\ref{eqn44}) with the $f_i$ being affine transformations of $K^n$.  This seems overly optimistic, but if true would provide a suitable henselian analogy to Haskell, Hrushovski, and Macpherson's elimination of imaginaries for algebraically closed valued fields \cite{hhm1} in terms of definable modules and torsors over $\cO$.

\begin{bibdiv}
\begin{biblist}

\bib{ak1}{article}{
   author={Ax, James},
   author={Kochen, Simon},
   title={Diophantine problems over local fields. I},
   journal={Amer. J. Math.},
   volume={87},
   date={1965},
   pages={605--630},
   issn={0002-9327},
}

\bib{ak2}{article}{
   author={Ax, James},
   author={Kochen, Simon},
   title={Diophantine problems over local fields. II. A complete set of
   axioms for $p$-adic number theory},
   journal={Amer. J. Math.},
   volume={87},
   date={1965},
   pages={631--648},
   issn={0002-9327},
 }

\bib{ak3}{article}{
   author={Ax, James},
   author={Kochen, Simon},
   title={Diophantine problems over local fields. III. Decidable fields},
   journal={Ann. of Math. (2)},
   volume={83},
   date={1966},
   pages={437--456},
   issn={0003-486X},
}

\bib{cl1}{article}{
   author={Cluckers, Raf},
   author={Loeser, Fran{\c{c}}ois},
   title={$b$-minimality},
   journal={J. Math. Log.},
   volume={7},
   date={2007},
   number={2},
   pages={195--227},
   issn={0219-0613},
}

\bib{coh1}{article}{
   author={Cohen, Paul J.},
   title={Decision procedures for real and $p$-adic fields},
   journal={Comm. Pure Appl. Math.},
   volume={22},
   date={1969},
   pages={131--151},
   issn={0010-3640}
}

\bib{hhm1}{article}{
   author={Haskell, Deirdre},
   author={Hrushovski, Ehud},
   author={Macpherson, Dugald},
   title={Definable sets in algebraically closed valued fields: elimination
   of imaginaries},
   journal={J. Reine Angew. Math.},
   volume={597},
   date={2006},
   pages={175--236},
   issn={0075-4102},
}

\bib{hhm2}{book}{
   author={Haskell, Deirdre},
   author={Hrushovski, Ehud},
   author={Macpherson, Dugald},
   title={Stable domination and independence in algebraically closed valued
   fields},
   series={Lecture Notes in Logic},
   volume={30},
   publisher={Association for Symbolic Logic},
   place={Chicago, IL},
   date={2008}
}

\bib{hol1}{article}{
   author={Holly, Jan E.},
   title={Canonical forms for definable subsets of algebraically closed and
   real closed valued fields},
   journal={J. Symbolic Logic},
   volume={60},
   date={1995},
   number={3},
   pages={843--860},
   issn={0022-4812}
}

\bib{hol2}{article}{
   author={Holly, Jan E.},
   title={Prototypes for definable subsets of algebraically closed valued
   fields},
   journal={J. Symbolic Logic},
   volume={62},
   date={1997},
   number={4},
   pages={1093--1141},
   issn={0022-4812}
}

\bib{hm1}{article}{
	author={Hrushovski, Ehud},
	author={Martin, Ben},
	title={Zeta functions from definable equivalence relations},
	journal={Preprint \url{http://arxiv.org/abs/math/0701011}},
	date={2006}
}

\bib{hk1}{article}{
   author={Hrushovski, Ehud},
   author={Kazhdan, David},
   title={Integration in valued fields},
   conference={
      title={Algebraic geometry and number theory},
   },
   book={
      series={Progr. Math.},
      volume={253},
      publisher={Birkh\"auser Boston},
      place={Boston, MA},
   },
   date={2006},
   pages={261--405},
   }

\bib{kuh1}{article}{
   author={Kuhlmann, Franz-Viktor},
   title={Quantifier elimination for Henselian fields relative to additive
   and multiplicative congruences},
   journal={Israel J. Math.},
   volume={85},
   date={1994},
   number={1-3},
   pages={277--306},
   issn={0021-2172}
}

\bib{mac1}{article}{
   author={Macintyre, Angus},
   title={On definable subsets of $p$-adic fields},
   journal={J. Symbolic Logic},
   volume={41},
   date={1976},
   number={3},
   pages={605--610},
   issn={0022-4812}
}

\bib{mel1}{article}{
   author={Mellor, T.},
   title={Imaginaries in real closed valued fields},
   journal={Ann. Pure Appl. Logic},
   volume={139},
   date={2006},
   number={1-3},
   pages={230--279},
   issn={0168-0072}
}

\bib{rib2}{article}{
   author={Ribenboim, Paulo},
   title={Equivalent forms of Hensel's lemma},
   journal={Exposition. Math.},
   volume={3},
   date={1985},
   number={1},
   pages={3--24},
   issn={0723-0869}
}

\bib{rob1}{book}{
   author={Robinson, Abraham},
   title={Complete theories},
   publisher={North-Holland Publishing Co.},
   place={Amsterdam},
   date={1956},
   pages={vii+129}
}

\bib{yin1}{article}{
   author={Yin, Yimu},
   title={Henselianity and the Denef-Pas language},
   journal={J. Symbolic Logic},
   volume={74},
   date={2009},
   number={2},
   pages={655--664},
   issn={0022-4812}
}

\end{biblist}
\end{bibdiv}

\end{document}